\definecolor{violet}{rgb}{0.580,0,0.827}
\newcommand{\myvec}[1]{\ensuremath{\begin{bmatrix}#1\end{bmatrix}}}
\crefname{hypothesis}{Hypothesis}{Hypotheses}
\title{Data Assimilation with Deep Neural Nets Informed by Nudging
\author{Harbir Antil\thanks{Department of Mathematical Sciences and The Center for Mathematics and Artificial Intelligence, George Mason University, Fairfax, VA 22030 
  (\email{hantil@gmu.edu}).}
\and Rainald L\"ohner\thanks{The Center for Computational Fluid Dynamics, George Mason University, Fairfax, VA 22030 
  (\email{rlohner@gmu.edu}).}
\and Randy Price\thanks{The Center for Mathematics and Artificial Intelligence and The Center for Computational Fluid Dynamics, George Mason University, Fairfax, VA 22030 
  (\email{rprice25@gmu.edu}).}
}
\thanks{This work is partially supported by the Defense Threat Reduction Agency (DTRA) under contract HDTRA1-15-1-0068. Jacqueline Bell served as the technical monitor. Also, it is partially supported by 
NSF grants DMS-2110263, DMS-1913004, the Air Force Office of Scientific Research under Award NO: FA9550-19-1-0036.}}
\newcommand*{\addFileDependency}[1]{
  \typeout{(#1)}
  \@addtofilelist{#1}
  \IfFileExists{#1}{}{\typeout{No file #1.}}
}
\begin{document}

\maketitle

\begin{abstract}
The nudging data assimilation algorithm is a powerful tool used to forecast phenomena of interest given incomplete and noisy observations. Machine learning is becoming increasingly popular in data assimilation given its ease of computation and forecasting ability. This work proposes a new approach to data assimilation via machine learning where Deep Neural Networks (DNNs) are being taught the nudging algorithm. The accuracy of the proposed DNN based algorithm is comparable to the nudging algorithm and it is confirmed by the Lorenz 63 and Lorenz 96 numerical examples. The key advantage of the proposed approach is the fact that, once trained, DNNs are cheap to evaluate in comparison to nudging where typically differential equations are needed to be solved. Standard exponential type approximation results are established for the Lorenz 63 model for both the continuous and discrete in time models. These results can be directly coupled with estimates for DNNs (whenever available), to derive the overall  approximation error estimates of the proposed algorithm.
\end{abstract}

\begin{keywords}
  Data assimilation, DNNs, Nudging, Algorithm, Convergence analysis, Lorenz
\end{keywords}

\begin{AMS}
  93C20, 93C15, 68T07, 76B75
\end{AMS}

\section{Introduction}
Data assimilation techniques are used to improve our knowledge about the state by combining the underlying model with potentially noisy and partial observations. Classically, this is done with the Kalman filter and its modifications, see for instance \cite{bocquet,kellylawstuart,Law2015DataAA,majda_harlim_2012, reich_cotter_2015}. An alternative approach is the nudging data assimilation algorithm which uses the model to nudge the solution towards the observations. Originally nudging was done in the context of  finite-dimensional dynamical systems governed by ordinary differential equations and meteorology \cite{DataAssimilationandInitializationofHurricanePredictionModels,npg-15-305-2008,TheInitializationofNumericalModelsbyaDynamicInitializationTechnique,364173,article,UseofFourDimensionalDataAssimilationinaLimitedAreaMesoscaleModelPartIExperimentswithSynopticScaleData}. Later nudging was generalized to other models and more general situations, including partial differential equations \cite{Albanez2016ContinuousDA,Bessaih2015ContinuousDA,biswas,Farhat2015ContinuousDA,Farhat2015DataAA,Farhat2016OnTC,Farhat2017ADA,foias2016discrete,Markowich2015ContinuousDA}. 

The key challenge is that the initial conditions $u_0$ in a continuous dynamical system are unknown
\begin{equation}\label{eq:orig}
\frac{d}{dt} u = F(u),\quad u(0)=u_0 .
\end{equation}
Therefore, one cannot use the dynamical system \eqref{eq:orig} to make future predictions. 
Assuming that some partial observations of $u$ are available, the nudging algorithm entails solving an associated system

\begin{equation} \label{aotsystem}
\frac{d}{dt} w =F(w)-\mu (I_M w-I_M u),\quad w(0)=w_0\ (\mbox{arbitrary}),
\end{equation}
where $I_M$ is a linear operator called the interpolant operator. Here $M$ refers to the number of observations  and $\mu>0$ is the nudging parameter. In various cases, it is possible to establish approximation error estimates between $u$ and $w$ solving \eqref{eq:orig} and \eqref{aotsystem}, respectively. For instance, in case of Navier-Stokes equations, recently exponential estimates has been derived in \cite{biswas} for continuous in time observations and  in \cite{foias2016discrete} for discrete in time observations.

Data assimilation techniques have proven their effectiveness and now there are efforts to incorporate machine learning into the algorithms to either improve forecasting ability or reduce computation time. In \cite{app11031114,pawar2020long} the authors learn the misfit between a data assimilation algorithm and the dynamical model in order to improve forecasting ability. In \cite{brajard2021combining} the authors combine data assimilation with a neural network trained to learn the model error which arises when we don't have access to (\ref{eq:orig}) but only an approximation. In \cite{Brajard2021} the authors train a surrogate model with data from the ensemble Kalman filter to replace the dynamical system and are able to recover some of the forecasting ability with noisy observations. 

This work presents a new approach to data assimilation via machine learning. In particular, we combine the two fields to reduce computation time by learning the nudged equations (\ref{aotsystem}). We introduce a new algorithm which uses the training data from nudging, notice that this training data can be generated using any other approach, such as ensemble Kalman filter, as well. Thus the deep neural networks (DNNs) are trained to effectively learn the data assimilation algorithm. We show that the DNN based algorithm retains the accuracy of the nudging algorithm and is cheap to evaluate. The effectiveness of the proposed algorithm is shown on the Lorenz 63 and Lorenz 96 models in which case (\ref{aotsystem}) is a system of ODEs.  

DNNs are known to learn ODEs/PDEs \cite{HAntil_HCElman_AOnwunta_DVerma_2021a,brown2021novel,Ghosh2020STEERS,ji2020stiff,PINN}. In case of stiff or chaotic systems \cite{brown2021novel}, one idea is to learn one time step map, i.e., learn the update map $u^{n-1}\to u^n$. The neural net takes as input the state at time $t_{n-1}$ and the output is the approximate state at time $t_n$. This is the approach we will take, the details are given in section \ref{sec:method}. We use the newly introduced residual neural networks (ResNets), with bias ordering \cite{antil2021deep}, to carry out the training, but other types of DNN strategies are also equally applicable. ResNets are a popular way to carry out supervised learning, their connection to ODEs has been exploited to establish their stability \cite{KHe_XZhang_SRen_JSun_2016a,EHaber_LRuthotto_2018a,LRuthotto_EHaber_2019a}. In addition, the bias order reduces the parameter search space, it also provides more accurate results than standard DNNs, see \cite{antil2021deep} for a numerical evidence.

In the case of the Lorenz 63 model we prove exponential error estimates between the nudging solution and the reference solution for both the time continuous and fully discrete cases. These error estimates can be directly combined with DNN error estimates (whenever available).

\medskip
\noindent
{\bf Outline:} Section~\ref{sec:obj} provides our problem statement, which is followed by section~\ref{sec:method} where we introduce our algorithm. Section~\ref{sec:error}, focuses on error analysis. As an example, we first provide exponential type approximation error estimates in case of the nudging applied to the Lorenz 63 model. The is followed by a discussion of the overall error estimates due to the DNN approximation. Next, we provide an experimental protocol in section~\ref{sec:protocol}, which is followed by our experimental results on Lorenz 63 and 96 models in section~\ref{sec:experiments}.

\section{Objectives and Definitions}\label{sec:obj}

The goal of this section is to restate the nudging algorithm and recall the DNN algorithm with bias ordering. This section will set the stage for our proposed algorithm in next section.

\subsection{Nudging}
We consider a chaotic dynamical system represented by a differential equation
\begin{equation}\label{ODE}
    \frac{du}{dt} = f(u(t)),
\end{equation}
with $u(t)\in X$ which we take as the ``truth" or reference solution. Here $X$ is either an infinite dimensional space (in case of PDEs) or a finite dimensional space $X = \mathbb{R}^d$ in case of vector ODEs. The challenge is that the initial condition $u(0)$ is an unknown, therefore we cannot identify the dynamics of $u$.

Next, we introduce a continuous nudging strategy
\begin{equation} \label{nudging_cont}
\frac{dw}{dt} = f(w)-\mu (I_M w-I_M u),\quad w(0)=w_0\ (\mbox{arbitrary}),
\end{equation}
where $I_M : X \rightarrow Y$ is known as an interpolant operator. Here $Y$ is an infinite dimensional function space in the case of PDEs and $Y = \mathbb{R}^{d^*}$ ($d^*\le d)$ for ODEs. Moreover, $M$ refers to the number of observations and $\mu$ is the nudging parameter. One example of $I_M$ is the orthogonal projection operator $P_K$. Here, $P_K$ denotes the orthogonal projection onto a set $K$.
In the case $X=\mathbb{R}^d$, $K = \mbox{span} \{ \phi_i \}_{i=1}^K$ with  $\phi_i = \textbf{e}_i$ the standard basis elements in $\mathbb{R}^d$.

Notice that, the dynamics of $w$ are known, therefore \eqref{nudging_cont} does not suffer from the same issues as \eqref{ODE}. The goal of nudging is to select $\mu > 0$ to ensure that the nudging solution $w$ is tracking the observations $\{I_M u\}$. Thus we can replace \eqref{ODE} by \eqref{nudging_cont}.

From a practical point of view, it is essential to consider the corresponding discrete nudging data assimilation algorithm. This can be represented by the altered system 
\begin{align}\label{nudging}
\begin{aligned}
    \frac{dw}{dt} &= f(w(t)) - \mu(I_M(w(t_n))-I_M(u(t_{n}))),\quad \forall t\in[t_n,t_{n+1}], \\
    w(0)&=w_0,
\end{aligned}    
\end{align}
 $\{t_n\}_{n=1}^N$ are the discrete times at which we have observations of the reference solution. Note that in (\ref{nudging}), $\{I_M(u(t_n))\}_{n=1}^N$ is an input to the algorithm and represents the observations we have of the true solution to \eqref{ODE}.

\subsection{DNN with bias ordering} 
Next we consider the input-to-output map $w\to S(w)$ where $S$ is the map taking $w(t_{n-1})$ to $w(t_n)$ using (\ref{nudging}). The goal is to learn an approximation $\hat{S}$ of $S$ using Deep Neural Networks (DNNs). We consider the DNNs introduced in \cite{antil2021deep}, which in an abstract form amounts to solving an optimization problem 
\begin{align}\label{NN}
    \min_{\{W_\ell\}_{\ell=0}^{L-1},\{b_\ell\}_{\ell=0}^{L-2}} J(\{(y_L^i,S(u^i))\}_i,\{W_\ell\}_\ell,\{b_\ell\}_\ell), \\
    \text{subject to }y_L^i = \mathcal{F}(u^i;(\{W_\ell\},\{b_\ell\})),\quad i=1,...,N_s, \label{DNN} \\
    b_\ell^j\le b_\ell^{j+1},\quad j=1,...,n_{\ell+1}-1,\quad \ell=0,...,L-2, \label{biasordering} 
\end{align}
In \eqref{NN}, $J$ represents a loss function and we have $N_s$ samples for training. The equation \eqref{DNN} contains the DNN, in our numerical experiments we consider residual neural networks (ResNet). Moreover, 
the weight matrix $W_\ell\in\mathbb{R}^{n_\ell\times n_{\ell+1}}$, the bias vector $b_\ell\in\mathbb{R}^{n_\ell +1}$ where the $\ell$-th layer has $n_\ell$ neurons. Notice that,  \eqref{biasordering} enforces bias ordering in each layer, see \cite{antil2021deep} for more details. 

In our numerical experiments, we consider a quadratic loss  function, i.e., 
\begin{align}\label{lossf}
J:=\frac{1}{2N}\sum_{i=1}^N\|y_L^i-S(u^i)\|_2^2 + \frac{\lambda}{2}\sum_{\ell=0}^{L-1}(\|W_\ell\|_1 + \|b_\ell\|_1 + \|W_\ell\|_2^2 + \|b_\ell\|_2^2) ,
\end{align}
where the second term corresponds to regularization of weights and biases with $\lambda \ge 0$ denoting a regularization parameter.

Following \cite{antil2021deep}, we impose \eqref{biasordering} by using Moreau-Yosida penalty approach.
%
\begin{equation}
    J_{\gamma} := J + \frac{\gamma}{2}\sum_{\ell=0}^{L-2}\sum_{j=1}^{n_{\ell+1}-1}\|\min\{b_{\ell}^{j+1}-b_{\ell}^j,0\}\|_2^2,
\end{equation}
where $\gamma$ is the penalization parameter. For convergence results as $\gamma \rightarrow \infty$ we refer to \cite{antil2021deep}.

\subsection{Residual Neural Networks (ResNets)}
The results obtained below are using\\ ResNets with the following form
\begin{align}\label{ResNet1}
    \mathcal{F} &\ = f_{L-1}\circ f_{L-2} \circ ... \circ f_0, \\
    f_0(y_0) &:= \sigma(W_0y_0+b_0),\\
    f_l(y_\ell) &:= y_\ell + \tau\sigma(W_\ell y_\ell + b_\ell),\quad \ell=1,...,L-2,\\
    f_{L-1} &:= W_{L-1}y_{L-1},\label{ResNet4}
\end{align}
for scalar $\tau>0$ and the activation function is taken to be a smooth quadratic approximation of the ReLU function,
\begin{equation}\label{sigma}
 \sigma(x) =
    \begin{cases}
     \max\{0,x\} & |x|>\epsilon,\\
      \frac{1}{4\epsilon}x^2+\frac{1}{2}x+\frac{\epsilon}{4} & |x|\le\epsilon.\\
    \end{cases}     
\end{equation}
For more on these networks, see \cite{Antil2021NovelDN,antil2020fractional,paraDNN} among others.

The objective of this work is to learn the nudging equation  \eqref{nudging} by training the DNN in \eqref{NN}-\eqref{biasordering} with training data consisting of  
observations $\{I_M(u(t_n))\}_{n=1}^N$ and solution to \eqref{nudging}.
Similarly to nudging (cf.~\eqref{nudging}), in this way, the DNN (cf.~\eqref{DNN}) is able to learn how to track the true solution (cf.~\eqref{ODE}) from incomplete data.

\section{Method}\label{sec:method}

Let us assume that incomplete observations of the truth are available at observation times $\{t_k\}_{k=1}^{N}$. The training data for the ResNet,

\begin{equation}\label{trainingdata}
    \bigg\{\bigg\{\bigg(\myvec{w^i(t_k)\\I_M(u^i(t_k))},w^i(t_{k+1})\bigg)\bigg\}_{k=1}^{N}\bigg\}_{i=1}^{N_s},
\end{equation}
has $N \times N_s$ samples or input-output pairs. Each $i$ corresponds to a different reference solution which is generated by solving (\ref{ODE}) for different initial conditions. The pair \\ $((w^i(t_k),I_M(u^i(t_k)),w^i(t_{k+1}))$ is then generated by solving the corresponding nudging ODE (\ref{nudging}) on the interval $[t_k,t_{k+1}]$ with initial data $w^i(t_k)$ and given observations\\ $I_M(u^i(t_k))$. See Algorithm \ref{alg:offline} and Figure~\ref{diagram:offline} on how to prepare the input-output training data for the DNN.

\begin{algorithm}[h!]
\caption{Offline Data Preparation to Train DNN}\label{alg:offline}
\begin{algorithmic}[1]
\STATE Generate $N_s$ initial conditions $\{u^i(0)\}_{i=1}^{N_s}$ for (\ref{ODE}).
\STATE Choose final time $T$ based on number of training samples desired.
\STATE Solve \eqref{ODE} on $[0, T]$, for each initial condition, to generate $\{u^i(t)\}_{i=1}^{N_s}$ for $t\in[0,T]$.
\FOR{$i \in \{0,\dots,N_s\},\ k\in \{1,\dots,N\}$}
        \IF{$k=0$}
        \STATE Initialize $w^i(t_0)=0$ (or arbitrary).
        \ENDIF
        \STATE Solve (\ref{nudging}) on $[t_k,t_{k+1}]$ with observation $I_M(u^i(t_k))$ and initial condition $w^i(t_k)$.
        \STATE Store $\text{INPUT}(i,k) = [w^i(t_k),I_M(u^i(t_k))]$.
        \STATE Store $\text{OUTPUT}(i,k) = [w^i(t_{k+1})]$.
        \ENDFOR
\STATE Return: INPUT and OUTPUT to train the DNN.
\end{algorithmic}
\end{algorithm}

\pgfdeclarelayer{background}
\pgfdeclarelayer{foreground}
\pgfsetlayers{background,main,foreground}

\tikzstyle{sensor}=[draw, fill=blue!20, text width=4.1em, 
    text centered, minimum height=2.5em]
\tikzstyle{sensor1}=[draw, fill=blue!20, text width=2.4em, 
    text centered, minimum height=2.5em]
\tikzstyle{ann} = [above, text width=5em]
\tikzstyle{naveqs} = [sensor, text width=2em, fill=red!20, 
    minimum height=5em, rounded corners]
\tikzstyle{nudge} = [sensor, text width=2.8em, fill=red!20, 
minimum height=6em, rounded corners]
\def\blockdist{2.5}
\def\edgedist{2.5}
\begin{flushleft}
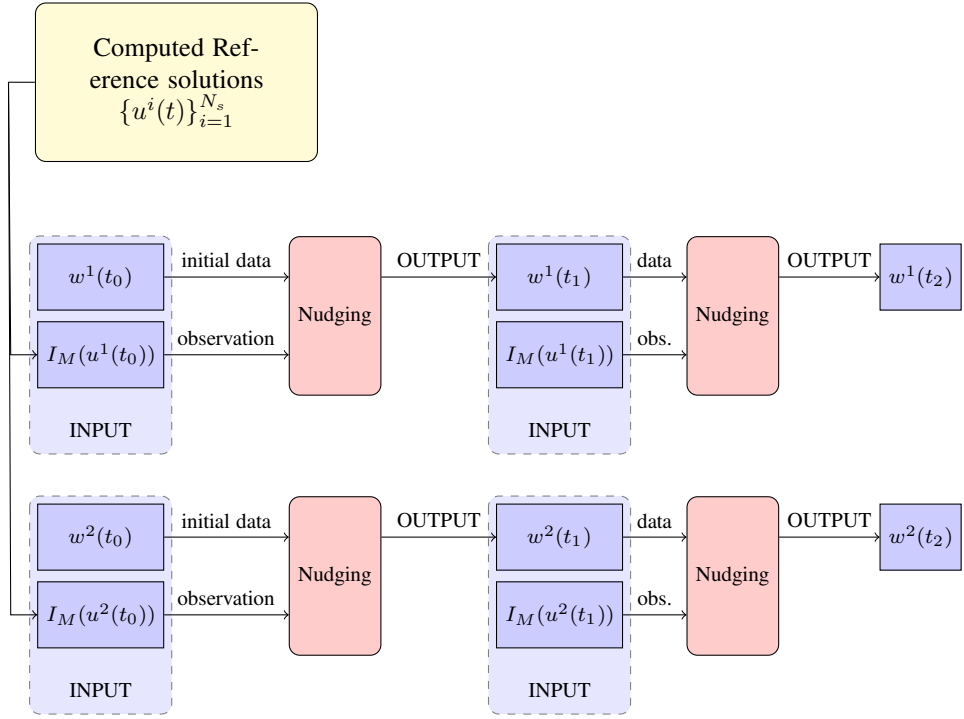
\begin{figure}
\begin{tikzpicture}
    \node (naveq) [nudge] {\footnotesize Nudging};
    \node (naveq1) [nudge,right of=naveq, xshift=12.2em] {\footnotesize Nudging};
    \node (naveq2) [sensor, text width=10em, fill=yellow!20, 
minimum height=6em, rounded corners, above of=naveq, yshift=6em, xshift = -6em]{Computed Reference solutions\\
$\{u^{i}(t)\}_{i=1}^{N_s}$};
    \path (naveq.140)+(-\blockdist,0) node (gyros) [sensor] {\footnotesize $w^1(t_0)$};
    \path (naveq.140)+(8.4,0) node (gyros2) [sensor1] {\footnotesize $w^1(t_2)$};
    \path (naveq.140)+(3.6,0) node (gyros1) [sensor] {\footnotesize $w^1(t_1)$};
    \path (naveq.-140)+(3.6,0) node (accel1) [sensor] {\footnotesize $I_M(u^1(t_1))$};
    \path (naveq.-140)+(-\blockdist,0) node (accel) [sensor] {\footnotesize $I_M(u^1(t_0))$};
    \path [draw, ->] (gyros) -- node [above] {\footnotesize initial data} 
        (naveq.west |- gyros) ;
    \path [draw, ->] (gyros1) -- node [above] {\footnotesize data} 
        (naveq1.west |- gyros1) ;
    \path [draw, ->] (accel) -- node [above] {\footnotesize observation} 
        (naveq.west |- accel);
    \path [draw, ->] (accel1) -- node [above] {\footnotesize obs.} 
    (naveq1.west |- accel1);
    \node (IMU) [below of=accel] {\footnotesize INPUT};
    \node (IMU1) [below of=accel1] {\footnotesize INPUT};
    \path (naveq.south west)+(-0.6,-0.4) node (INS) {};
    \path [draw, <-] (gyros1) -- node [above] {\footnotesize OUTPUT} 
    (naveq.east |- gyros1) ;
    \path [draw, <-] (gyros2) -- node [above] {\footnotesize OUTPUT} 
    (naveq1.east |- gyros2) ;
    
    \node (naveq20) [nudge,below of=naveq, yshift=-7em] {\footnotesize Nudging};
    \node (naveq21) [nudge,right of=naveq20, xshift=12.2em] {\footnotesize Nudging};
    \path (naveq20.140)+(-\blockdist,0) node (gyros20) [sensor] {\footnotesize $w^2(t_0)$};
    \path (naveq20.140)+(8.4,0) node (gyros22) [sensor1] {\footnotesize $w^2(t_2)$};
    \path (naveq20.140)+(3.6,0) node (gyros21) [sensor] {\footnotesize $w^2(t_1)$};
    \path (naveq20.-140)+(3.6,0) node (accel21) [sensor] {\footnotesize $I_M(u^2(t_1))$};
    \path (naveq20.-140)+(-\blockdist,0) node (accel20) [sensor] {\footnotesize $I_M(u^2(t_0))$};
    \path [draw, ->] (gyros20) -- node [above] {\footnotesize initial data} 
        (naveq20.west |- gyros20) ;
    \path [draw, ->] (gyros21) -- node [above] {\footnotesize data} 
        (naveq21.west |- gyros21) ;
    \path [draw, ->] (accel20) -- node [above] {\footnotesize observation} 
        (naveq20.west |- accel20);
    \path [draw, ->] (accel21) -- node [above] {\footnotesize obs.} 
    (naveq21.west |- accel21);
    \node (IMU20) [below of=accel20] {\footnotesize INPUT};
    \node (IMU21) [below of=accel21] {\footnotesize INPUT};
    \path (naveq20.south west)+(-0.6,-0.4) node (INS20) {};
    \path [draw, <-] (gyros21) -- node [above] {\footnotesize OUTPUT} 
    (naveq20.east |- gyros21) ;
    \path [draw, <-] (gyros22) -- node [above] {\footnotesize OUTPUT} 
    (naveq21.east |- gyros22) ;
    \coordinate[left = 1em of naveq2] (Empty1);
    \coordinate[left = 1em of accel] (Empty2);
    \coordinate[left = 1em of accel20] (Empty3);
    \path [draw, ->] (naveq2) -- (Empty1) --  (Empty2)  |- (accel) ;
    \path [draw, ->] (Empty1) --  (Empty3)  |- (accel20) ;

    \begin{pgfonlayer}{background}
        \path (gyros.north west)+(-0.1,0.1) node (a) {};
        \path (IMU.south -| gyros.east)+(+0.1,-0.1) node (b) {};
        \path[fill=blue!10,rounded corners, draw=black!50, dashed]
            (a) rectangle (b);
        \path (gyros1.north west)+(-0.1,0.1) node (a) {};
        \path (IMU1.south -| gyros1.east)+(+0.1,-0.1) node (b) {};
        \path[fill=blue!10,rounded corners, draw=black!50, dashed]
            (a) rectangle (b);
        \path (gyros20.north west)+(-0.1,0.1) node (a) {};
        \path (IMU20.south -| gyros20.east)+(+0.1,-0.1) node (b) {};
        \path[fill=blue!10,rounded corners, draw=black!50, dashed]
            (a) rectangle (b);
        \path (gyros21.north west)+(-0.1,0.1) node (a) {};
        \path (IMU21.south -| gyros21.east)+(+0.1,-0.1) node (b) {};
        \path[fill=blue!10,rounded corners, draw=black!50, dashed]
            (a) rectangle (b);
    \end{pgfonlayer}
\end{tikzpicture}
\caption{Generating the training data (\ref{trainingdata}) with arbitrary initial data $w^i(t_0)$ for $N_s = 2$ reference solutions. Each input-output pair pictured is then used to train the DNN.}
\label{diagram:offline}
\end{figure}
\end{flushleft}

\begin{remark}
Notice that, we are using nudging to generate the training data for the DNN. It is important to generate training data closer to the initial start time rather than generating data after nudging algorithm has already converged.
\end{remark}

Algorithm~\ref{alg:online} and Figure~\ref{diag:online} describes how to 
use the proposed DNN approach to carry out data assimilation. Approximation error estimates for this algorithm will be discussed in the next section.

\begin{algorithm}
\caption{Online Usage of DNN for Data Assimilation}\label{alg:online}
\begin{algorithmic}[1]
\STATE Initialize $w_{DNN}(t_0)=0$ (or arbitrary)
\STATE Observe $I_M(u^i(t_0))$
\STATE Set $k=0$
\WHILE{Observations are available}
\STATE $w_{DNN}(t_{k+1}) = DNN(w_{DNN}(t_{k}),I_M(u^i(t_k))$
\STATE Observe $I_M(u^i(t_{k+1}))$ 
\STATE $k=k+1$
\ENDWHILE
\end{algorithmic}
\end{algorithm}

\begin{flushleft}
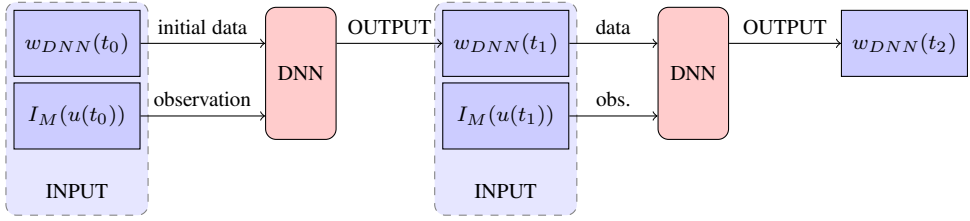
\begin{figure}
\begin{tikzpicture}
    \node (naveq) [naveqs] {\footnotesize DNN};
    \node (naveq1) [naveqs,right of=naveq, xshift=12em] {\footnotesize DNN};
    \path (naveq.140)+(-\blockdist,0) node (gyros) [sensor] {\footnotesize $w_{DNN}(t_0)$};
    \path (naveq.140)+(8.5,0) node (gyros2) [sensor] {\footnotesize $w_{DNN}(t_2)$};
    \path (naveq.140)+(3.2,0) node (gyros1) [sensor] {\footnotesize $w_{DNN}(t_1)$};
    \path (naveq.-130)+(3.2,0) node (accel1) [sensor] {\footnotesize $I_M(u(t_1))$};
    \path (naveq.-130)+(-\blockdist,0) node (accel) [sensor] {\footnotesize $I_M(u(t_0))$};
    \path [draw, ->] (gyros) -- node [above] {\footnotesize initial data} 
        (naveq.west |- gyros) ;
    \path [draw, ->] (gyros1) -- node [above] {\footnotesize data} 
        (naveq1.west |- gyros1) ;
    \path [draw, ->] (accel) -- node [above] {\footnotesize observation} 
        (naveq.west |- accel);
    \path [draw, ->] (accel1) -- node [above] {\footnotesize obs.} 
    (naveq1.west |- accel1);
    \node (IMU) [below of=accel] {\footnotesize INPUT};
    \node (IMU1) [below of=accel1] {\footnotesize INPUT};
    \path (naveq.south west)+(-0.6,-0.4) node (INS) {};
    \path [draw, <-] (gyros1) -- node [above] {\footnotesize OUTPUT} 
    (naveq.east |- gyros1) ;
    \path [draw, <-] (gyros2) -- node [above] {\footnotesize OUTPUT} 
    (naveq1.east |- gyros2) ;
    \begin{pgfonlayer}{background}
        \path (gyros.north west)+(-0.1,0.1) node (a) {};
        \path (IMU.south -| gyros.east)+(+0.1,-0.1) node (b) {};
        \path[fill=blue!10,rounded corners, draw=black!50, dashed]
            (a) rectangle (b);
        \path (gyros1.north west)+(-0.1,0.1) node (a) {};
        \path (IMU1.south -| gyros1.east)+(+0.1,-0.1) node (b) {};
        \path[fill=blue!10,rounded corners, draw=black!50, dashed]
            (a) rectangle (b);
    \end{pgfonlayer}
\end{tikzpicture}
\caption{Two data assimilation steps using the trained DNN with arbitrary initial data $w_{DNN}(t_0)$.}
\label{diag:online}
\end{figure}
\end{flushleft}

\section{Error Analysis}
\label{sec:error}

Using Algorithm~\ref{alg:online}, we have replaced the nudging algorithm by the DNN, where the training data is generated by nudging. Thus to derive the error estimates for the proposed DNN approach, we will first derive the error estimates for the nudging algorithm. Such nudging estimates are problem dependent -- we will proceed with two particular examples. In the first example, we will focus on the widely used Lorenz 63 ODE model. In the second example, we will consider the Navier-Stokes partial differential equation. We will only provide references to the the error estimates in the latter case, for instance \cite{biswas}. Subsequently, overall error estimates for the DNN based Algorithm~\ref{alg:online} will be discussed in subsection~\ref{s:total}.

\subsection{Lorenz 63 and Nudging}
\label{s:lorenz}

The goal of this section is provide exponential type approximation estimates between the nudged (continuous as well as discrete) and the original solution to Lorenz 63 model. The latter is given by the following ODEs for which we assume $\sigma,\rho,\beta > 0$: 
\begin{subequations}
\begin{align}\label{lorenz}
    \frac{dx}{dt} &= \sigma(y-x),\\
    \frac{dy}{dt} &= x(\rho-z)-y, \label{lorenz2}\\ 
    \frac{dz}{dt} &= xy-\beta z \label{lorenz3}. 
\end{align}
\end{subequations}
Notice that, the initial conditions $x(0), y(0)$, and $z(0)$ are unknowns. We define the nudging algorithm for continuous in time $x$-component observations with the altered ODEs:
\begin{align}\label{lorenz_na}
\begin{aligned}
    \frac{d\bar{x}}{dt} &= \sigma(\bar{y}-\bar{x})-\mu(\bar{x}-x),\quad\bar{x}(0)=0,\\ 
    \frac{d\bar{y}}{dt} &= \bar{x}(\rho-\bar{z})-\bar{y},\quad\bar{y}(0)=0,\\ 
    \frac{d\bar{z}}{dt} &= \bar{x}\bar{y}-\beta \bar{z},\quad\bar{z}(0)=0 . 
\end{aligned}    
\end{align}
Here, $\mu > 0$ is the nudging parameter and for simplicity we have assumed that the initial conditions for the nudged variables are 0. Throughout the analysis we assume that our solution $(x,y,z)$ lies on the global attractor giving us the following bounds found in \cite{doering}.
\begin{theorem}\label{thm:kbound}
Let (x,y,z) be a trajectory in the attractor of the Lorenz system \eqref{lorenz}--\eqref{lorenz3} then $x(t)^2+y(t)^2+z(t)^2\le K\quad \forall t\in\mathbb{R}$ where
\begin{equation}\label{k_bound}
    K=\frac{\beta^2(\rho + \sigma)^2}{4(\beta-1)}\approx 1540.27.
\end{equation}
\end{theorem}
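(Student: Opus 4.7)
The strategy is the standard Doering--Gibbon trapping-region argument: construct a quadratic Lyapunov functional whose sublevel sets are forward-invariant under the Lorenz flow, and read off the bound from the asymptotic value of the functional on trajectories. The natural choice is
\[
  V(x,y,z) := x^2 + y^2 + (z - \sigma - \rho)^2,
\]
the shift by $\sigma+\rho$ in the $z$ variable being engineered precisely so that the cubic cross term $xyz$ will cancel when we differentiate $V$ along solutions of \eqref{lorenz}--\eqref{lorenz3}.

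First, I would multiply \eqref{lorenz} by $2x$, \eqref{lorenz2} by $2y$, and \eqref{lorenz3} by $2(z-\sigma-\rho)$ and add. The $-2xyz$ contribution from the $\rho-z$ term and the $+2xyz$ contribution from the $xy$ term cancel, leaving the clean dissipative identity
\[
  \dot V = -2\sigma x^2 - 2 y^2 - 2\beta z^2 + 2\beta(\sigma+\rho)\, z.
\]
Second, I would add $2V$ to both sides and regroup; after substitution the $y^2$ terms cancel outright and
\[
  \dot V + 2V = -2(\sigma-1)x^2 - 2(\beta-1) z^2 + 2(\sigma+\rho)(\beta-2)\,z + 2(\sigma+\rho)^2.
\]
In the standard Lorenz regime $\sigma\ge 1$ and $\beta>1$ the $x^2$ term is non-positive, and completing the square in $z$ bounds the remainder by its vertex value. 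A short algebraic simplification collapses $(\beta-2)^2/(2(\beta-1))+2$ to $\beta^2/(2(\beta-1))$, giving the key inequality
\[
  \dot V + 2 V \;\le\; \frac{\beta^2(\rho+\sigma)^2}{2(\beta-1)} \;=\; 2K.
\]

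Third, Gronwall's inequality then yields $V(t) \le V(t_0)\, e^{-2(t-t_0)} + K\bigl(1-e^{-2(t-t_0)}\bigr)$. Since trajectories on the global attractor are defined for all $t \in \mathbb{R}$ and are a priori bounded, sending $t_0 \to -\infty$ produces $V(t) \le K$ uniformly on the attractor, which is the quantitative content of the theorem.

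The main obstacle, and the thing I would want to reconcile against \cite{doering}, is the exact form of what gets bounded: the calculation above naturally controls the \emph{shifted} quadratic $x^2+y^2+(z-\sigma-\rho)^2$, whereas the theorem is stated for the unshifted $x^2+y^2+z^2$. The two agree only up to a translation by $\sigma+\rho$ in $z$, so either the theorem should be read as a bound on the Lyapunov functional $V$ itself (whose ellipsoidal sublevel set $\{V\le K\}$ is the actual trapping region), or an additional translation/triangle-inequality step is needed to pass to the unshifted norm at the cost of modifying the constant. Either way, the exponential trapping-region mechanism is the same.
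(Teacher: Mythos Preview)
The paper does not give its own proof of this statement; it is quoted from \cite{doering}, and your argument is exactly the Doering--Gibbon trapping-region computation that reference contains. Your algebra is correct line by line: the cubic cancellation, the identity $\dot V+2V=-2(\sigma-1)x^2-2(\beta-1)z^2+2(\sigma+\rho)(\beta-2)z+2(\sigma+\rho)^2$, the completion of the square, and the collapse $(\beta-2)^2+4(\beta-1)=\beta^2$ are all right, and the $t_0\to-\infty$ limit is the correct way to pass from the transient Gronwall estimate to a uniform bound on the attractor.

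Your caveat at the end is also correct and worth stating plainly: what the Lyapunov argument actually proves is $x^2+y^2+(z-\sigma-\rho)^2\le K$, not $x^2+y^2+z^2\le K$. The theorem as stated in the paper is an imprecise transcription of the Doering--Gibbon result. For the downstream proofs in Section~\ref{s:lorenz} this does not matter: the only uses of the bound are $y^2\le K$ and $z^2\le K$ inside Young-inequality estimates, and since the attractor lies in the half-space $z>0$ one has $z^2\le (z-\sigma-\rho)^2+(\sigma+\rho)^2\le K+(\sigma+\rho)^2$, so at worst the constant $K$ in those proofs should be replaced by a fixed multiple of itself. The exponential-decay conclusions are unchanged. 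You were right to flag it.
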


We begin by defining the ODEs that govern the evolution of $\Delta X,\ \Delta Y\text{ and }\Delta Z$, let 
\begin{equation}
    \Delta X = x - \bar{x},\quad \Delta Y = y - \bar{y},\quad \Delta Z = z -\bar{z} .
\end{equation}
It is not difficult to see that (cf.~\cite{martinez}):
\begin{align}\label{lorenz_diff}
    \Delta\dot{X} &= -\sigma\Delta X + \sigma\Delta Y-\mu\Delta X ,\\ \nonumber
    \Delta\dot{Y} &= \rho\Delta X -\Delta Y - \bar{x}\Delta Z - \Delta X z,\\ \nonumber
    \Delta\dot{Z} &= \bar{x}\Delta Y + y\Delta X - \beta\Delta Z.\label{lorenz_diff3}
\end{align}
Next, we show that the nudged solution converges in time exponentially to the reference solution. 
\begin{theorem}\label{thm-x-continuous}
Let $V = \Delta X^2 + \Delta Y^2 + \Delta Z^2$, and
$\mu\ge \max\{2,\frac{1}{2} + (\rho+\sigma)^2 - \sigma + K +\frac{K}{2\beta}\}$ where $K$ is given in (\ref{k_bound}). Then
\begin{equation*}
    V(t)\le e^{-ct}V(0)\quad \mbox{for all } t\ge 0.
\end{equation*}
where $c=\min\{1,\beta \}$.
\end{theorem}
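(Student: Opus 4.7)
The plan is to prove this by a direct Lyapunov/energy argument applied to $V$ itself. First I would differentiate $V = \Delta X^2 + \Delta Y^2 + \Delta Z^2$ in time and substitute from the error system \eqref{lorenz_diff}. Multiplying each equation by twice the corresponding error yields three purely dissipative diagonal contributions $-2(\sigma+\mu)\Delta X^2$, $-2\Delta Y^2$, and $-2\beta \Delta Z^2$, plus several off-diagonal products. The key structural cancellation is that $-2\bar{x}\,\Delta Y \Delta Z$ from the $\Delta\dot Y$ equation is exactly cancelled by $+2\bar{x}\,\Delta Y \Delta Z$ from the $\Delta\dot Z$ equation (this is the familiar Lorenz cancellation). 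The surviving cross terms in $\dot V$ are therefore $2(\sigma + \rho - z)\Delta X \Delta Y$ and $2y\,\Delta X \Delta Z$.

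Second, I would absorb these two cross-terms into the diagonal dissipation using weighted Young's inequalities whose weights are tuned so that the $\Delta Y^2$ and $\Delta Z^2$ contributions land at exactly $-1$ and $-\beta$. Concretely, $2(\sigma+\rho-z)\Delta X \Delta Y \le (\sigma+\rho-z)^2 \Delta X^2 + \Delta Y^2$ leaves $-\Delta Y^2$, and $2y\,\Delta X \Delta Z \le (y^2/\beta)\Delta X^2 + \beta \Delta Z^2$ leaves $-\beta \Delta Z^2$. The upshot is
\begin{equation*}
\dot V \;\le\; \Bigl[-2(\sigma+\mu) + (\sigma+\rho-z)^2 + \tfrac{y^2}{\beta}\Bigr]\Delta X^2 \;-\; \Delta Y^2 \;-\; \beta \Delta Z^2.
\end{equation*}

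Third, I would dispatch the bracketed $\Delta X^2$ coefficient. Expanding $(\sigma+\rho-z)^2 \le 2(\sigma+\rho)^2 + 2z^2$ and invoking the attractor bounds $y^2 \le K$, $z^2 \le K$ from Theorem~\ref{thm:kbound}, together with the hypothesis on $\mu$ written as $2(\sigma+\mu) \ge 1 + 2(\sigma+\rho)^2 + 2K + K/\beta$, collapses the bracket to at most $-1 + 2z^2 - 2K \le -1$. Since the three diagonal coefficients are now at most $-1,\,-1,\,-\beta$ and $c=\min\{1,\beta\}$, we obtain $\dot V \le -c V$, and Grönwall's inequality delivers $V(t) \le e^{-ct}V(0)$ as claimed.

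The step I expect to be most delicate is the weight selection in Young's inequality: the choice of weights $1$ and $\beta$ is what simultaneously makes the $\Delta Y^2$ and $\Delta Z^2$ absorptions tight and aligns the residual $\Delta X^2$ coefficient with the precise form of the hypothesis on $\mu$. A more generous or asymmetric splitting would force a strictly larger threshold on $\mu$. As a side remark, the auxiliary requirement $\mu \ge 2$ appearing in the hypothesis does not seem to enter this continuous-time computation; I suspect it is carried along for use in the discrete-in-time counterpart advertised in Section~\ref{s:lorenz}.
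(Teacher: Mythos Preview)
Your proposal is correct and follows essentially the same energy/Lyapunov route as the paper: differentiate $V$, exploit the $\bar{x}\,\Delta Y\Delta Z$ cancellation, absorb the remaining cross terms with weighted Young's inequalities and the attractor bound $y^2,z^2\le K$, then apply Gr\"onwall. The only cosmetic difference is that the paper splits $(\rho+\sigma)\Delta X\Delta Y$ and $z\Delta X\Delta Y$ before Young's whereas you keep $(\sigma+\rho-z)$ together and expand afterward; both lead to the identical $\Delta X^2$ coefficient and hence to exactly the stated threshold on $\mu$, and your remark that the condition $\mu\ge 2$ is not actually used in the continuous argument is accurate.
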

\begin{proof}
Multiply the equations for $\Delta \dot{X},\Delta \dot{Y}\text{ and }\Delta \dot{Z}$ by $\Delta X, \Delta Y \text{ and } \Delta Z$ respectively. 
\begin{align*}
    \Delta\dot{X}\Delta X &= -(\sigma+\mu)\Delta X^2 + \sigma\Delta X\Delta Y ,\\ \nonumber
    \Delta\dot{Y}\Delta Y &= \rho\Delta X\Delta Y -\Delta Y^2 - \bar{x}\Delta Y\Delta Z - z\Delta X\Delta Y,\\ \nonumber
    \Delta\dot{Z}\Delta Z &= \bar{x}\Delta Y\Delta Z + y\Delta X\Delta Z - \beta\Delta Z^2. 
\end{align*}
After summing,
\begin{align}\label{lorenz_diff_b}
    \frac{1}{2}\dot{V} + (\sigma + \mu)\Delta X^2 + \Delta Y^2 + \beta\Delta Z^2 = (\rho+\sigma)\Delta X\Delta Y - z\Delta X\Delta Y + y\Delta X\Delta Z.
\end{align}
Each term on the right hand side is then estimated using the Young's inequality and (\ref{k_bound}).
\begin{equation*}
\begin{aligned}
    |(\rho+\sigma)\Delta X\Delta Y| &\le (\rho+\sigma)^2\Delta X^2 + \frac{1}{4}\Delta Y^2 , \\ 
    |z\Delta X\Delta Y| &\le K\Delta X^2 + \frac{1}{4}\Delta Y^2 , \\
    |y\Delta X\Delta Z| &\le \frac{K}{2\beta}\Delta X^2 + \frac{\beta}{2}\Delta Z^2.
\end{aligned}    
\end{equation*}
Combing these estimates with \eqref{lorenz_diff_b} yields,
\begin{equation}
    \frac{1}{2}\dot{V} + \Big(\sigma+\mu -(\rho+\sigma)^2 - K - \frac{K}{2\beta}\Big)\Delta X^2 +\frac{1}{2}\Delta Y^2 + \frac{\beta}{2}\Delta Z^2 \le 0.
\end{equation}
We then choose $\mu$ such that $\sigma+\mu -(\rho+\sigma)^2 - K - \frac{K}{2\beta} \ge \frac{1}{2} $ resulting in the condition $\mu\ge \frac{1}{2} + (\rho+\sigma)^2 - \sigma + K +\frac{K}{2\beta}$. 
Let $c=\min\{1,\beta \}$. Therefore,
\begin{equation}
    \dot{V} + cV \le 0.
\end{equation}
The asserted result then follows immediately.
\end{proof}
Next, we show a similar result in the discrete case. We define the nudging algorithm for discrete in time $x$-component observations with the altered ODEs:
\begin{align}\label{lorenz_na_discrete}
    \frac{d\bar{x}}{dt} &= \sigma(\bar{y}-\bar{x})-\mu(\bar{x}(t_n)-x(t_n)),\quad\forall t\in[t_n,t_{n+1}),\\ \nonumber
    \frac{d\bar{y}}{dt} &= \bar{x}(\rho-\bar{z})-\bar{y},\\ \nonumber
    \frac{d\bar{z}}{dt} &= \bar{x}\bar{y}-\beta \bar{z}, 
\end{align}
 and we define $\delta=t_{n+1}-t_n$. The difference equations now become,
\begin{align}\label{lorenz_diff_discrete}
    \Delta\dot{X} &= -\sigma\Delta X + \sigma\Delta Y -\mu\Delta X(t_n),\\
    \Delta\dot{Y} &= \rho\Delta X -\Delta Y - \bar{x}\Delta Z - \Delta X z,\quad \forall t\in[t_n,t_{n+1}) ,\nonumber\\
    \Delta\dot{Z} &= \bar{x}\Delta Y + y\Delta X - \beta\Delta Z.\nonumber
\end{align}

We will then show that the nudged solution in this case converges in time exponentially to the reference solution. The presented approach is motivated by \cite{foias2016discrete}.
\begin{theorem}\label{thm-x-discrete}
Let $\mu\ge 2(\rho+\sigma)^2 - 2\sigma + 2K +\frac{K}{\beta}$ where $K$ is given in (\ref{k_bound}) and
\begin{equation}\label{delta}
    \delta \le \min\bigg\{\frac{1}{2\mu},\frac{1}{64(\sigma+\mu)^2},\frac{1}{32\mu\sigma^2}\bigg\}.
\end{equation}
Then for $c=\min\{\frac{\mu}{2},1,\beta \}$ and $\gamma = \frac{1}{2c}(1+(2c-1)e^{-c\delta})<1$, we have
\begin{equation}
    V(t)\le \gamma^nV(0),\quad \mbox{for all } t\in[t_n,t_{n+1}).
\end{equation}
\end{theorem}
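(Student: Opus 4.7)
\medskip
The plan is to mimic the energy estimate of Theorem~\ref{thm-x-continuous} while carefully tracking the extra error introduced by evaluating the nudging feedback at the lagged time $t_n$ rather than at the current $t$. Multiplying the three equations of \eqref{lorenz_diff_discrete} by $\Delta X$, $\Delta Y$, $\Delta Z$ and summing reproduces the continuous-case identity, except that the nudging contribution is now $-\mu\,\Delta X(t_n)\Delta X(t)$, which I would split as
\begin{equation*}
-\mu\,\Delta X(t_n)\Delta X(t) = -\mu\,\Delta X(t)^{2} + \mu\bigl(\Delta X(t)-\Delta X(t_n)\bigr)\Delta X(t).
\end{equation*}
The first piece supplies the dissipation of the continuous proof, while the second is a correction that must be absorbed. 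Bounding $(\rho+\sigma)\Delta X\Delta Y$, $z\,\Delta X\Delta Y$ and $y\,\Delta X\Delta Z$ by Young's inequality and Theorem~\ref{thm:kbound} exactly as in Theorem~\ref{thm-x-continuous}, and applying one more Young's inequality to the new correction term, yields
\begin{equation*}
\dot V(t) + cV(t) \le 2\mu\,\bigl|\Delta X(t)-\Delta X(t_n)\bigr|^{2},\qquad t\in[t_n,t_{n+1}).
\end{equation*}
The hypothesis on $\mu$ is doubled relative to Theorem~\ref{thm-x-continuous} precisely to reserve enough of the $\Delta X^{2}$ dissipation for this splitting, which in turn fixes $c=\min\{\mu/2,1,\beta\}$.

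The next step is to control the time-lag term. Rewriting the first line of \eqref{lorenz_diff_discrete} as a linear ODE for $g(t):=\Delta X(t)-\Delta X(t_n)$, namely $\dot g+\sigma g=-(\sigma+\mu)\Delta X(t_n)+\sigma\,\Delta Y(t)$ with $g(t_n)=0$, and applying Duhamel followed by Cauchy--Schwarz gives a bound of the form
\begin{equation*}
\bigl|\Delta X(t)-\Delta X(t_n)\bigr|^{2} \le C(\sigma+\mu)^{2}\delta^{2}\,\Delta X(t_n)^{2} + C\sigma^{2}\delta\!\int_{t_n}^{t_{n+1}}\!V(s)\,ds
\end{equation*}
for an absolute constant $C$. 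The three constraints in \eqref{delta} are calibrated for exactly this substitution: the bound $\delta\le 1/(64(\sigma+\mu)^{2})$ tames the Lipschitz-type factor, $\delta\le 1/(32\mu\sigma^{2})$ handles the $\int V$ contribution via a short bootstrap that assumes $V(s)\le 2V(t_n)$ on the interval and verifies this a posteriori, and $\delta\le 1/(2\mu)$ dispatches the $\mu^{2}\delta^{2}\Delta X(t_n)^{2}$ factor. After the dust settles one is left with
\begin{equation*}
\dot V(t) + cV(t) \le \tfrac{1}{2}V(t_n),\qquad t\in[t_n,t_{n+1}).
\end{equation*}

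Finally, integrating this inequality on $[t_n,t]$ with integrating factor $e^{ct}$ gives
\begin{equation*}
V(t) \le e^{-c(t-t_n)}V(t_n) + \frac{1-e^{-c(t-t_n)}}{2c}\,V(t_n) = V(t_n)\left[\frac{1}{2c}+\frac{2c-1}{2c}e^{-c(t-t_n)}\right],
\end{equation*}
and evaluating at $t=t_{n+1}$ produces $V(t_{n+1})\le\gamma\,V(t_n)$ with $\gamma$ as in the statement. Because the bracketed quantity is non-increasing in $t$ when $c\ge 1/2$ (which holds under the hypotheses, since $\beta>1$ is needed even for \eqref{k_bound} and $\mu$ is large), one has $V(t)\le V(t_n)$ throughout the interval, and iterating the one-step contraction delivers $V(t)\le \gamma^{n}V(0)$ for $t\in[t_n,t_{n+1})$. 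I expect the main obstacle to be the second step: tracking the three independent contributions to $|\Delta X(t)-\Delta X(t_n)|^{2}$ sharply enough that the specific smallness conditions \eqref{delta} emerge and the bootstrap closes, following the template of \cite{foias2016discrete}.
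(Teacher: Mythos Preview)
Your overall strategy is correct and lands on the same key differential inequality $\dot V+cV\le\tfrac12 V(t_n)$ as the paper, after which the Gronwall step is identical. The genuine difference is in how the time-lag error $|\Delta X(t)-\Delta X(t_n)|$ is controlled. You rewrite the first difference equation as a linear ODE for $g=\Delta X-\Delta X(t_n)$, apply Duhamel, and then invoke a bootstrap hypothesis $V(s)\le 2V(t_n)$ on the subinterval (closed a posteriori by a continuity argument) to reduce the $\int V$ contribution to a multiple of $V(t_n)$. The paper instead writes $|\Delta X(t)-\Delta X(t_n)|\le\int_{t_n}^t|\Delta\dot X|$, estimates $|\Delta\dot X|$ directly from \eqref{lorenz_diff_discrete}, absorbs the self-referential piece $\mu\int|\Delta\dot X|$ using $\mu\delta\le\tfrac12$, and arrives at
\[
\dot V+\tfrac{\mu}{2}\Delta X^2+\Delta Y^2+\beta\Delta Z^2\le 16\mu\delta\!\int_{t_n}^t\!\bigl((\sigma+\mu)^2\Delta X^2+\sigma^2\Delta Y^2\bigr)ds.
\]
Rather than bootstrapping, the paper then integrates this inequality over $[t_n,t]$, drops the nonnegative $V(t)$, and moves the right-hand integral to the left; the remaining dissipation integral is bounded above by $V(t_n)$, and the conditions in \eqref{delta} are tuned so that the right-hand side is dominated by one half of that dissipation, giving $\boxed{\mathrm I}\le\tfrac12 V(t_n)$ directly. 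Your Duhamel decomposition buys a cleaner separation of the constant-forcing and $\Delta Y$ contributions, but at the cost of the bootstrap; the paper's integrate-and-absorb trick avoids any a posteriori verification and makes it transparent why each of the three bounds in \eqref{delta} is needed. Both routes are valid and yield the stated constants.
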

\begin{proof}
Multiply the equations for $\Delta \dot{X},\Delta \dot{Y}\text{ and }\Delta \dot{Z}$ by $\Delta X, \Delta Y \text{ and } \Delta Z$ respectively. We obtain
\begin{align}\label{lorenz_diff_discrete_a}
    \Delta\dot{X}\Delta X &= -(\sigma+\mu)\Delta X^2 + \sigma\Delta X\Delta Y -\mu(\Delta X(t_n)-\Delta X)\Delta X ,\\ \nonumber
    \Delta\dot{Y}\Delta Y &= \rho\Delta X\Delta Y -\Delta Y^2 - \bar{x}\Delta Y\Delta Z - z\Delta X\Delta Y,\\ \nonumber
    \Delta\dot{Z}\Delta Z &= \bar{x}\Delta Y\Delta Z + y\Delta X\Delta Z - \beta\Delta Z^2. 
\end{align}
After summing, 
\begin{align}\label{lorenz_diff_discrete_b}
    \frac{1}{2}\dot{V} + (\sigma + \mu)\Delta X^2 + \Delta Y^2 + \beta\Delta Z^2 &= (\rho+\sigma)\Delta X\Delta Y - z\Delta X\Delta Y + y\Delta X\Delta Z\\
    &-\mu(\Delta X(t_n) - \Delta X)\Delta X.\nonumber
\end{align}
The first three terms on the right hand side are then estimated using Young's inequality and \eqref{k_bound}, i.e., 
\begin{equation*}
\begin{aligned}
    |(\rho+\sigma)\Delta X\Delta Y|\le (\rho+\sigma)^2\Delta X^2 + \frac{1}{4}\Delta Y^2 , \\
    |z\Delta X\Delta Y|\le K\Delta X^2 + \frac{1}{4}\Delta Y^2 , \\
    |y\Delta X\Delta Z| \le \frac{K}{2\beta}\Delta X^2 + \frac{\beta}{2}\Delta Z^2.
\end{aligned}
\end{equation*}
Moreover, the final term is estimated as follows:
\begin{align}\label{tough_part}
    \mu|(\Delta X(t_n) - \Delta X)\Delta X| &\le \mu \bigg(\int_{t_n}^{t}|\Delta\dot{X}(s)| ds\bigg)|\Delta X|\\
    &\le \mu\bigg(\int_{t_n}^{t} |\Delta\dot{X}(s)| ds\bigg)^2 + \frac{\mu}{4}\Delta X^2.\nonumber
\end{align}
We bound the integral term present on the right-hand-side of (\ref{tough_part}) before continuing. Using (\ref{lorenz_diff_discrete}), 
\begin{equation}
    |\Delta\dot{X}(s)|\le \sigma|\Delta X| + \sigma|\Delta Y| + \mu|\Delta X| + \mu\int_{t_n}^s|\Delta\dot{X}(\tau)|d\tau.
\end{equation}
Integrating both sides we obtain that
\begin{align*}
    \int_{t_n}^t |\Delta \dot{X}(s)|ds &\le \int_{t_n}^t((\sigma+\mu)|\Delta X| + \sigma|\Delta Y|)ds + \mu\delta\int_{t_n}^t|\Delta\dot{X}(s)|ds.
\end{align*}    
Using $\delta \le \frac{1}{2\mu}$, and using H\"older's inequality, we arrive at
\begin{align*}    
    \int_{t_n}^t |\Delta \dot{X}(s)|ds
    &\le 2\int_{t_n}^t((\sigma+\mu)|\Delta X| + \sigma|\Delta Y|)ds \nonumber \\
    &\le 2^{3/2}\delta^{1/2}\bigg(\int_{t_n}^t ((\sigma+\mu)^2\Delta X^2 + \sigma^2\Delta Y^2)ds\bigg)^{1/2} .
\end{align*}
Combining the above derived estimates with \eqref{lorenz_diff_discrete_b} and choosing $\mu\ge 2(\rho+\sigma)^2 - 2\sigma + 2K +\frac{K}{\beta}$,
\begin{equation}\label{lorenz_diff_discrete_c}
    \dot{V} + \frac{\mu}{2}\Delta X^2 + \Delta Y^2 + \beta\Delta Z^2\le 16\mu\delta\int_{t_n}^t ((\sigma+\mu)^2\Delta X^2 + \sigma^2\Delta Y^2)ds \quad =: \boxed{\mathrm{I}}.
\end{equation}
Integrating both sides,
\begin{equation*}
    V(t) - V(t_n) + \int_{t_n}^t\big(\frac{\mu}{2}\Delta X^2 +\Delta Y^2 + \beta\Delta Z^2\big)ds\le 16\mu\delta^2\int_{t_n}^t ((\sigma+\mu)^2\Delta X^2 + \sigma^2\Delta Y^2)ds.
\end{equation*}
Notice that $0 \le V(t)$ by construction. Using this and after rearranging terms, we arrive at 
\begin{equation}\label{lorenz_diff_discrete_cc}
   \int_{t_n}^t\left( \left(\frac{\mu}{2} - 16\mu\delta^2 (\sigma+\mu)^2 \right)\Delta X^2 + (1-16\mu\delta^2\sigma^2)\Delta Y^2 + \beta\Delta Z^2\right)ds \le V(t_n)
\end{equation}
In order to get the final estimates, we will bound (from below) the left-hand-side in \eqref{lorenz_diff_discrete_cc} by $\boxed{\mathrm{I}}$. This can be done by appropriately choosing $\delta$. In particular $\delta \le \frac{1}{64(\sigma+\mu)^2}$ and $\delta \le \frac{1}{32\mu \sigma^2}$, leads to 
\begin{align*}
    \boxed{\mathrm{I}} \le \frac{1}{2}V(t_n).
\end{align*}

Let $c=\min\{\frac{\mu}{2},1,\beta \}$. Therefore (\ref{lorenz_diff_discrete_c}) becomes
\begin{equation}\label{lorenz_diff_discrete_d}
    \dot{V} + cV \le \frac{1}{2}V(t_n),\quad \forall t\in[t_n,t_{n+1}).
\end{equation}
and we conclude 
\begin{align*}
    V(t_{n+1}) &\le \frac{1}{2c}V(t_n)(1-e^{-c\delta}) + e^{-c\delta}V(t_n),\\
    &\le \gamma V(t_n)\le\gamma^{n+1} V(t_0),\quad \gamma :=\frac{1}{2c}(1+(2c-1)e^{-c\delta})<1.
\end{align*}
Recall $\delta=t_{n+1}-t_n$. Integrating (\ref{lorenz_diff_discrete_d}) over $[t_n,t]$ yields
\begin{equation}
    V(t) \le \frac{1}{2c}V(t_n)(1+(2c-1)e^{-(t-t_n)})\le V(t_n)\le \gamma^n V(t_0)\quad\text{for } t_n<t<t_{n+1}.
\end{equation}
The proof is complete.
\end{proof}

Similar proofs will show exponential convergence for observations taken in any two or three components for both continuous and discrete in time observations. The continuous case proof becomes easier with more observations while the discrete case proof becomes more cumbersome. We provide the details for the case of discrete in time observations in the $y$ and $z$ components. The major difference in proving Theorem \ref{thm-x-discrete} with observations in two or three components is deriving (\ref{tough_part}) for each of the components with observations as this requires us to have a bound on the nudged solution $\bar{w}$. 

We define the nudging algorithm for discrete in time $y,z$-component observations with the altered ODEs:
\begin{align}\label{lorenz_na_discrete_xy}
    \frac{d\bar{x}}{dt} &= \sigma(\bar{y}-\bar{x}),\\ \nonumber
    \frac{d\bar{y}}{dt} &= \bar{x}(\rho-\bar{z})-\bar{y}-\mu(\bar{y}(t_n)-y(t_n)),\quad\forall t\in[t_n,t_{n+1}),\\ \nonumber
    \frac{d\bar{z}}{dt} &= \bar{x}\bar{y}-\beta \bar{z}-\mu(\bar{z}(t_n)-z(t_n)), 
\end{align}
 and we define $\delta=t_{n+1}-t_n$. The difference equations now become,
\begin{align}\label{lorenz_diff_discrete_yz}
    \Delta\dot{X} &= -\sigma\Delta X + \sigma\Delta Y ,\\
    \Delta\dot{Y} &= \rho\Delta X -\Delta Y - \bar{x}\Delta Z - \Delta X z-\mu\Delta Y(t_n),\quad \forall t\in[t_n,t_{n+1}) ,\nonumber\\
    \Delta\dot{Z} &= \bar{x}\Delta Y + y\Delta X - \beta\Delta Z-\mu\Delta Z(t_n).\nonumber
\end{align}
We will then show that the nudged solution in this case converges in time exponentially to the reference solution on a finite interval provided we have a bound on the nudged solution on this finite interval. Subsequently, we will prove that the nudged solution is in fact globally bounded.
\begin{theorem}\label{thm-yz-discrete}
Assume $|\bar{w}(t)|^2\le \widetilde{K}\quad\forall t\in[0,t_K]$. Let $\mu\ge 4\max\{\frac{(\rho+\sigma)^2}{\sigma} + K -1, K-\beta\}$ where $K$ is given in (\ref{k_bound}), $\widetilde{K}=5K$ and
\begin{equation}\label{delta_yz}
    \delta \le \frac{1}{64}\min\bigg\{\frac{\sigma}{2\mu((\rho+K^{\frac{1}{2}})^2+K)},\frac{1}{((1+\mu)^2+\widetilde{K})},\frac{1}{((\beta+\mu)^2+\widetilde{K})}\bigg\}.
\end{equation}
Furthermore, $\{t_n\}$ satisfy $t_n\le t_K$.
Then for $c=\min\{\frac{\sigma}{2},\mu\}$ and $\gamma = \frac{1}{2c}(1+(2c-1)e^{-c\delta})<1$
the following holds
\begin{equation}
    V(t)\le \gamma^nV(0)\quad \forall t\in[t_n,t_{n+1}),\ .
\end{equation}
\end{theorem}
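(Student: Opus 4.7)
The plan is to adapt the argument of Theorem~\ref{thm-x-discrete}, with two structural modifications: we now carry two mismatch terms (one from each of the $y$- and $z$-observations), and because the coefficient $\bar x$ enters $\Delta\dot Y$ and $\Delta\dot Z$, we must control $\bar x$ pointwise, which is exactly what the hypothesis $|\bar w(t)|^2\le\widetilde{K}$ provides. As before I multiply the three equations in~\eqref{lorenz_diff_discrete_yz} by $\Delta X,\Delta Y,\Delta Z$ and sum. Writing $\mu\Delta Y(t_n)\Delta Y = \mu\Delta Y^2 + \mu(\Delta Y(t_n)-\Delta Y)\Delta Y$ and likewise for $Z$ moves dissipative contributions $\mu\Delta Y^2,\mu\Delta Z^2$ to the left. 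A convenient cancellation of the two $\bar x\Delta Y\Delta Z$ contributions then yields
\[
\tfrac{1}{2}\dot V + \sigma\Delta X^2 + (1+\mu)\Delta Y^2 + (\beta+\mu)\Delta Z^2 = (\rho+\sigma)\Delta X\Delta Y - z\Delta X\Delta Y + y\Delta X\Delta Z - \mu(\Delta Y(t_n)-\Delta Y)\Delta Y - \mu(\Delta Z(t_n)-\Delta Z)\Delta Z.
\]

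The three cross terms on the right are bounded by Young's inequality and~\eqref{k_bound}, distributing $\Delta X^2$ contributions into $\sigma\Delta X^2$ on the left and the residual $\Delta Y^2,\Delta Z^2$ pieces into $(1+\mu)\Delta Y^2$ and $(\beta+\mu)\Delta Z^2$. The assumption $\mu\ge 4\max\{\tfrac{(\rho+\sigma)^2}{\sigma}+K-1,K-\beta\}$ is designed precisely so that a positive multiple of each squared quantity remains on the left. Each of the two mismatch terms is then treated exactly as in~\eqref{tough_part}:
\[
\mu|(\Delta Y(t_n)-\Delta Y)\Delta Y| \le \mu\Bigl(\int_{t_n}^t|\Delta\dot Y(s)|\,ds\Bigr)^2 + \tfrac{\mu}{4}\Delta Y^2,
\]
and the analogue for $Z$, after which the remaining task is to control the two integrated derivatives.

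This is where the hypothesis on $\bar w$ enters. Using~\eqref{lorenz_diff_discrete_yz}, the bounds $|y|,|z|\le K^{1/2}$, $|\bar x|\le\widetilde{K}^{1/2}$, and $|\Delta Y(t_n)|\le|\Delta Y(s)|+\int_{t_n}^s|\Delta\dot Y(\tau)|\,d\tau$, I obtain a Volterra-type bound
\[
|\Delta\dot Y(s)| \le (\rho+K^{1/2})|\Delta X| + (1+\mu)|\Delta Y| + \widetilde{K}^{1/2}|\Delta Z| + \mu\int_{t_n}^s|\Delta\dot Y(\tau)|\,d\tau,
\]
and an analogous inequality for $|\Delta\dot Z(s)|$ with coefficients involving $K^{1/2},\widetilde{K}^{1/2},(\beta+\mu)$. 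Integrating on $[t_n,t]$, absorbing the tail via $\mu\delta\le\tfrac{1}{2}$, and applying H\"older's inequality exactly as in the proof of Theorem~\ref{thm-x-discrete} then yields
\[
\Bigl(\int_{t_n}^t|\Delta\dot Y|\,ds\Bigr)^2 \le C\delta\int_{t_n}^t\bigl((\rho+K^{1/2})^2\Delta X^2 + (1+\mu)^2\Delta Y^2 + \widetilde{K}\,\Delta Z^2\bigr)\,ds,
\]
and the corresponding estimate for $Z$, for an absolute constant $C$.

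Substituting these back into the energy identity produces a differential inequality of the form $\dot V + cV \le C'\mu\delta\int_{t_n}^t(A_X\Delta X^2+A_Y\Delta Y^2+A_Z\Delta Z^2)\,ds$ with $c=\min\{\tfrac{\sigma}{2},\mu\}$ and $A_X\sim(\rho+K^{1/2})^2+K$, $A_Y\sim(1+\mu)^2+\widetilde{K}$, $A_Z\sim(\beta+\mu)^2+\widetilde{K}$. The three smallness conditions on $\delta$ in~\eqref{delta_yz} are chosen precisely so that, after one further integration in $t$ and using the dissipation on the left to control the inner integral, the right-hand side is bounded by $\tfrac{1}{2}V(t_n)$. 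The derivation of $V(t_{n+1})\le\gamma V(t_n)$ with $\gamma=\tfrac{1}{2c}(1+(2c-1)e^{-c\delta})<1$, and then the pointwise bound $V(t)\le\gamma^n V(0)$ for $t\in[t_n,t_{n+1})$, is then verbatim the end of the proof of Theorem~\ref{thm-x-discrete}. The main obstacle is the Volterra bound on $|\Delta\dot Y|,|\Delta\dot Z|$: without the uniform control $|\bar x|\le\widetilde{K}^{1/2}$, the closure on a single step fails, which is precisely why the theorem restricts to $t_n\le t_K$ and postpones the global-in-time bootstrap (with $\widetilde{K}=5K$) to the separate absorbing-set argument announced after the statement.
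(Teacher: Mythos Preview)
Your proposal is correct and follows essentially the same route as the paper: the energy identity, the Young-inequality splitting of the three cross terms (with the $\Delta X^2$ pieces absorbed into $\sigma\Delta X^2$), the treatment of the two mismatch terms via $\mu(\int|\Delta\dot Y|)^2+\tfrac{\mu}{4}\Delta Y^2$, the Volterra-type bound on $|\Delta\dot Y|,|\Delta\dot Z|$ using $|\bar x|\le\widetilde{K}^{1/2}$ and $\mu\delta\le\tfrac12$, and the final Gronwall-type closure are all exactly as in the paper's argument. You also correctly identify the role of the hypothesis $|\bar w|^2\le\widetilde{K}$ and the reason for the restriction $t_n\le t_K$.
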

\begin{proof}
Multiply the equations for $\Delta \dot{X},\Delta \dot{Y}\text{ and }\Delta \dot{Z}$ by $\Delta X, \Delta Y \text{ and } \Delta Z$ respectively. 
\begin{align}\label{lorenz_diff_discrete_a_yz}
    \Delta\dot{X}\Delta X &= -\sigma\Delta X^2 + \sigma\Delta X\Delta Y  ,\\ \nonumber
    \Delta\dot{Y}\Delta Y &= \rho\Delta X\Delta Y -\Delta Y^2 - \bar{x}\Delta Y\Delta Z - z\Delta X\Delta Y-\mu\Delta Y(t_n)\Delta Y,\\ \nonumber
    \Delta\dot{Z}\Delta Z &= \bar{x}\Delta Y\Delta Z + y\Delta X\Delta Z - \beta\Delta Z^2-\mu\Delta Z(t_n)\Delta Z. 
\end{align}
After summing,
\begin{align}\label{lorenz_diff_discrete_b_yz}
    \frac{1}{2}\dot{V} + \sigma\Delta X^2 + (1+\mu)\Delta Y^2 &+ (\beta+\mu)\Delta Z^2 = (\rho+\sigma)\Delta X\Delta Y - z\Delta X\Delta Y + y\Delta X\Delta Z\\
    &-\mu(\Delta Y(t_n) - \Delta Y)\Delta Y-\mu(\Delta Z(t_n) - \Delta Z)\Delta Z.\nonumber
\end{align}
Each term on the right hand side is then estimated using Young's inequality and (\ref{k_bound}).
\begin{equation}\label{eq:thmyz1}
    |(\rho+\sigma)\Delta X\Delta Y|\le \frac{\sigma}{4}\Delta X^2 + \frac{(\rho+\sigma)^2}{\sigma}\Delta Y^2. 
\end{equation}
\begin{equation}\label{eq:thmyz2}
    |z\Delta X\Delta Y|\le \frac{\sigma}{4}\Delta X^2 + K\Delta Y^2.
\end{equation}
\begin{equation}\label{eq:thmyz3}
    |y\Delta X\Delta Z| \le \frac{\sigma}{4}\Delta X^2 + K\Delta Z^2.
\end{equation}
Next we estimate the terms arising from the nudging term.
\begin{align}\label{tough_part_y}
    \mu|(\Delta Y(t_n) - \Delta Y)\Delta Y| &\le \mu\bigg(\int_{t_n}^{t}|\Delta\dot{Y}(s)|ds\bigg)|\Delta Y|\\
    &\le \mu\bigg(\int_{t_n}^{t}|\Delta\dot{Y}(s)|ds\bigg)^2 + \frac{\mu}{4}\Delta Y^2.\nonumber
\end{align}
In the same vein, we have that
\begin{align}\label{tough_part_z}
    \mu|(\Delta Z(t_n) - \Delta Z)\Delta Z| &\le \mu\bigg(\int_{t_n}^{t}|\Delta\dot{Z}(s)|ds\bigg)|\Delta Z|\\
    &\le \mu\bigg(\int_{t_n}^{t}|\Delta\dot{Z}(s)|ds\bigg)^2 + \frac{\mu}{4}\Delta Z^2.\nonumber
\end{align}
We bound the integral term present in the right hand side of (\ref{tough_part_y}) and (\ref{tough_part_z})  before continuing. Using (\ref{lorenz_diff_discrete_yz}), 
\begin{equation}\label{K_tilde}
    |\Delta\dot{Y}(s)|\le (\rho + K^{1/2})|\Delta X| + (1+\mu)|\Delta Y| + \widetilde{K}^{1/2}|\Delta Z| + \mu\int_{t_n}^s|\Delta\dot{Y}(\tau)|d\tau.
\end{equation}
\begin{equation}\label{eq:ZZ}
    |\Delta\dot{Z}(s)|\le K^{1/2}|\Delta X| + \widetilde{K}^{1/2}|\Delta Y| + (\beta+\mu)|\Delta Z| + \mu\int_{t_n}^s|\Delta\dot{Z}(\tau)|d\tau.
\end{equation}
Integrating both sides in \eqref{K_tilde}, using condition \eqref{delta} and applying Hölder's inequality, we obtain 
\begin{equation*}
    \int_{t_n}^t |\Delta \dot{Y}(s)|ds \le \int_{t_n}^t((\rho + K^{\frac{1}{2}})|\Delta X| + (1+\mu)|\Delta Y| + \widetilde{K}^{\frac{1}{2}}|\Delta Z|)ds+\mu\delta\int_{t_n}^t|\Delta\dot{Y}(s)|ds 
\end{equation*}    
From which, after using $\mu \delta \le 1/2$, we arrive 
\begin{align}\label{eq:tough_part_y1}    
 \int_{t_n}^t |\Delta \dot{Y}(s)|ds 
    &\le 2\int_{t_n}^t((\rho + K^{\frac{1}{2}})|\Delta X| + (1+\mu)|\Delta Y| + \widetilde{K}^{\frac{1}{2}}|\Delta Z|)ds. \\ \nonumber
    &\le 4\delta^{1/2}\bigg(\int_{t_n}^t \Big((\rho + K^{\frac{1}{2}})^2|\Delta X|^2 + (1+\mu)^2|\Delta Y|^2 + \widetilde{K}|\Delta Z|^2 \Big)ds\bigg)^{1/2} ,
\end{align}
where in the last step we have used H\"older's and Young's inequalities. Similarly, from \eqref{eq:ZZ} we obtain
\begin{align}\label{eq:tough_part_z1}
    \int_{t_n}^t |\Delta \dot{Z}(s)|ds 
    &\le 4\delta^{\frac12}\bigg(\int_{t_n}^t (K|\Delta X|^2 + \widetilde{K}|\Delta Y|^2 + (\beta+\mu)^2|\Delta Z|^2)ds\bigg)^{1/2} .
\end{align}
Using the estimates from \eqref{eq:thmyz1} -- \eqref{tough_part_z} in \eqref{lorenz_diff_discrete_b_yz}, we obtain that 
\begin{align*}
    &\frac12 \dot{V} + \frac{\sigma}{4}\Delta X^2 + \left(1+\frac{3\mu}{4} - \frac{(\rho+\sigma)^2}{\sigma} - K\right)\Delta Y^2 + \left( \beta + \frac{3\mu}{4} - K \right)\Delta Z^2 \\
    &\le \mu \left( \int_{t_n}^t |\Delta \dot{Y}| \right)^2 
       + \mu \left( \int_{t_n}^t |\Delta \dot{Z}| \right)^2 .
\end{align*}    
Subsequently, using \eqref{eq:tough_part_y1}, and \eqref{eq:tough_part_z1} and using the condition on $\mu$, we arrive at
\begin{align}\label{lorenz_diff_discrete_c_yz}
    &\dot{V} + \frac{\sigma}{2}\Delta X^2 + \mu\Delta Y^2 + \mu\Delta Z^2\\ \nonumber
    &\le 32\mu\delta\int_{t_n}^t \Big((\rho+K^{\frac{1}{2}})^2+K)\Delta X^2 + ((1+\mu)^2+\widetilde{K})\Delta Y^2 +((\beta+\mu)^2+\widetilde{K})\Delta Z^2 \Big)ds\\ \nonumber
    &=: \boxed{\mathrm{I}}.
\end{align}
Integrating both sides of \eqref{lorenz_diff_discrete_c_yz},
\begin{align}
    &V(t) - V(t_n) + \int_{t_n}^t\big(\frac{\sigma}{2}\Delta X^2 + \mu\Delta Y^2 + \mu\Delta Z^2\big)ds \\ \nonumber
   &\le 32\mu\delta^2\int_{t_n}^t ((\rho+K^{\frac{1}{2}})^2+K)\Delta X^2 + ((1+\mu)^2+\widetilde{K})\Delta Y^2 +((\beta+\mu)^2+\widetilde{K})\Delta Z^2)ds
\end{align}
Notice that $0 \le V(t)$ by construction. Using this and after rearranging terms, we arrive at 
\begin{align}
    \label{lorenz_diff_discrete_yz_c}
    &\int_{t_n}^t \big((\frac{\sigma}{2} - 32\mu\delta^2((\rho+K^{\frac{1}{2}})^2+K))\Delta X^2 +  \big(\mu-32\mu\delta^2((1+\mu)^2+\widetilde{K}))\Delta Y^2\\ \nonumber
    &\qquad + (\mu-32\mu\delta^2((\beta+\mu)^2+\widetilde{K}))\Delta Z^2\big)ds\\ 
    &\le V(t_n). \nonumber
\end{align}
In order to get the final estimates, we will bound (from below) the left-hand-side in \eqref{lorenz_diff_discrete_yz_c} by $\boxed{\mathrm{I}}$. This can be done by appropriately choosing $\delta$. In particular \ref{delta_yz} leads to
\begin{align*}
    \boxed{\mathrm{I}} \le \frac{1}{2}V(t_n).
\end{align*}
Let $c=\min\{\frac{\sigma}{2},\mu\}$. Therefore (\ref{lorenz_diff_discrete_c_yz}) becomes
\begin{equation}\label{lorenz_diff_discrete_d_yz}
    \dot{V} + cV \le \frac{1}{2}V(t_n),\quad \forall t\in[t_n,t_{n+1}) , 
\end{equation}
and we conclude 
\begin{align*}
    V(t_{n+1}) &\le \frac{1}{2c}V(t_n)(1-e^{-c\delta}) + e^{-c\delta}V(t_n),\\
    &\le \gamma V(t_n)\le\gamma^{n+1} V(t_0),\quad \gamma :=\frac{1}{2c}(1+(2c-1)e^{-\delta})<1.
\end{align*}
Recall $\delta=t_{n+1}-t_n$. Integrating (\ref{lorenz_diff_discrete_d_yz}) over $[t_n,t]$ yields
\begin{equation}
    V(t) \le \frac{1}{2}V(t_n)(1+e^{-c\delta})\le V(t_n)\le \gamma^n V(t_0)\quad\text{for } t_n<t<t_{n+1}.
\end{equation}
The proof is complete.
\end{proof}
Using the previous theorem we can now prove a bound for the nudged ODEs \eqref{lorenz_na_discrete_xy} which will allow us to prove Theorem \ref{thm-yz-discrete} for $t\in[0,\infty)$.
\begin{theorem}\label{thm:wtilde_bound}
Let $\mu\ge 4\max\{\frac{(\rho+\sigma)^2}{\sigma} + K -1, K-\beta\}$ where $K$ is given in (\ref{k_bound}), $\widetilde{K}=5K$ and $\delta$ satisfies \eqref{delta_yz}.
Then the nudged ODEs \eqref{lorenz_na_discrete_xy} satisfy
\[
|\bar{w}|^2\le \widetilde{K} := 5K\quad\forall t\in[0,\infty).
\]
\end{theorem}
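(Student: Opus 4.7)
The plan is a continuity/bootstrap argument: Theorem \ref{thm-yz-discrete} gives decay of $V$ conditional on the a priori bound $|\bar{w}|^2 \le \widetilde{K}$ holding on the interval under consideration, so we will use that decay to prove the a priori bound closes strictly, and hence extends to all of $[0,\infty)$.

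First, I would exploit the initial condition $\bar{w}(0)=0$ to control $V(0)$. Since $\Delta w(0) = w(0) - \bar{w}(0) = w(0)$ and the reference trajectory lies in the global attractor, Theorem \ref{thm:kbound} yields
\begin{equation*}
V(0) = |\Delta X(0)|^2 + |\Delta Y(0)|^2 + |\Delta Z(0)|^2 = |w(0)|^2 \le K.
\end{equation*}
Next, define
\begin{equation*}
T^\star := \sup \bigl\{\, T \ge 0 \ :\ |\bar{w}(t)|^2 \le \widetilde{K} = 5K \ \text{for all } t\in[0,T]\,\bigr\}.
\end{equation*}
Since $\bar{w}(0)=0$ and $\bar{w}$ is continuous in $t$, we have $T^\star>0$. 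The goal is to show $T^\star=\infty$.

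The core step is the bootstrap: on $[0,T^\star]$, the hypothesis of Theorem \ref{thm-yz-discrete} holds by definition of $T^\star$, so (taking $t_K=T^\star$ in that theorem) we obtain $V(t) \le \gamma^n V(0) \le V(0) \le K$ for all $t\in[t_n,t_{n+1})\subset[0,T^\star]$, using $\gamma<1$. Combining this with the reference bound via the elementary inequality $|\bar{w}|^2 \le 2|w|^2 + 2|w-\bar{w}|^2 = 2|w|^2 + 2V$ then gives
\begin{equation*}
|\bar{w}(t)|^2 \ \le\ 2K + 2V(t) \ \le\ 4K \ <\ 5K = \widetilde{K} \qquad \text{for all } t\in[0,T^\star].
\end{equation*}
This is strictly less than $\widetilde{K}$, which is precisely what allows the bootstrap to close: if $T^\star<\infty$ then by continuity $|\bar{w}(T^\star)|^2 = 5K$, contradicting the strict inequality just derived. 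Hence $T^\star = \infty$ and the claimed global bound follows.

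The only real obstacle is making sure the cycle of implications is legitimate, i.e.~that Theorem \ref{thm-yz-discrete} may be applied on $[0,T^\star]$ using only the already-established a priori bound, and that the returned bound ($4K$) is strictly below the assumed one ($5K$). The factor-of-two slack built into the choice $\widetilde{K}=5K$ (as opposed to, say, $4K$) is exactly what provides this strict margin; everything else is an application of the preceding theorem together with the attractor bound from Theorem \ref{thm:kbound}.
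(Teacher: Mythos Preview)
Your proposal is correct and follows essentially the same continuity/bootstrap argument as the paper: define the maximal time up to which the bound $|\bar w|^2\le 5K$ holds, apply Theorem \ref{thm-yz-discrete} on that interval to get $V\le V(0)\le K$, and combine with the attractor bound on $w$ to obtain the strictly improved bound $|\bar w|^2\le 4K<5K$, yielding the contradiction. The only cosmetic difference is that the paper uses the triangle inequality $|\bar w|\le V^{1/2}+|w|\le 2K^{1/2}$ in place of your $|\bar w|^2\le 2|w|^2+2V$, but both produce the same $4K$ bound.
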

\begin{proof}
Let $w=(x,y,z)$ and $\bar{w}=(\bar{x},\bar{y},\bar{z})$ be the Lorenz and nudging solutions respectively. Assume $|\bar{w}|^2\le 5K=: \widetilde{K}$ doesn't hold $\forall t\in [0,\infty)$ and define 
\[
T_K = \bigg\{s>0: |\bar{w}(t)|^2 \le \widetilde{K}\ \ \forall t\in [0,s] \bigg\}.
\]
$T_K$ is non-empty as $|\bar{w}(0)|=0$ and it is closed as $|\bar{w}|$ is continuous. Define $t^* = \max(T_K)$.  Then  by theorem \ref{thm-yz-discrete} with $t_K = t^*$ we would conclude $V(t^*)\le V(t_0)\le K$. But this is a contradiction since $|\bar{w}(t^*)|\le |w(t^*)-\bar{w}(t^*)| + |w(t^*)|= V(t^*)^{\frac{1}{2}}+|w(t^*)|\le 2K^{1/2}$. Recall $|w(t^*)|^2\le K$ holds by theorem \ref{thm:kbound}. Therefore, $|\bar{w}| \le \widetilde{K}^{\frac12}$.
\end{proof}
Now we can prove Theorem \ref{thm-yz-discrete} for $t\in[0,\infty)$.
\begin{corollary}\label{coro:yz-discrete}
Let $\mu\ge 4\max\{\frac{(\rho+\sigma)^2}{\sigma} + K -1, K-\beta\}$ where $K$ is given in (\ref{k_bound}), $\widetilde{K}=5K$ and $\delta$ satisfies \eqref{delta_yz}.
Then for $c=\min\{\frac{\sigma}{2},\mu\}$, $\gamma = \frac{1}{2c}(1+(2c-1)e^{-c\delta})<1$, \eqref{lorenz_na_discrete_xy} satisfies
\begin{equation}
    V(t)\le \gamma^nV(0)\quad \forall t\in[t_n,t_{n+1}).
\end{equation}
\end{corollary}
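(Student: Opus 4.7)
The plan is to observe that this corollary is an almost immediate consequence of combining Theorem \ref{thm:wtilde_bound} with Theorem \ref{thm-yz-discrete}. Theorem \ref{thm-yz-discrete} establishes the exponential decay $V(t) \le \gamma^n V(0)$ on an interval $[0, t_K]$ under the conditional hypothesis that the nudged trajectory is bounded by $\widetilde{K}$ on $[0, t_K]$. Theorem \ref{thm:wtilde_bound} removes this conditional hypothesis by showing that $|\bar{w}(t)|^2 \le \widetilde{K}$ holds for all $t \in [0, \infty)$ under exactly the same restrictions on $\mu$ and $\delta$ that appear in the corollary.

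First, I would invoke Theorem \ref{thm:wtilde_bound}, which applies verbatim given the stated lower bound on $\mu$, the restriction on $\delta$ from \eqref{delta_yz}, and the choice $\widetilde{K} = 5K$. This yields the uniform bound $|\bar{w}(t)|^2 \le \widetilde{K}$ on all of $[0, \infty)$. Next, I would fix an arbitrary $T > 0$, note that the hypothesis $|\bar{w}(t)|^2 \le \widetilde{K}$ for $t \in [0, T]$ is now available, and apply Theorem \ref{thm-yz-discrete} with $t_K = T$. This gives $V(t) \le \gamma^n V(0)$ for every $t \in [t_n, t_{n+1})$ with $t_{n+1} \le T$. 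Since $T$ is arbitrary, the estimate extends to all $t \in [0, \infty)$, which is precisely the claim of the corollary.

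The only bookkeeping required is to check that $c$ and $\gamma$ in the corollary coincide with those appearing in Theorem \ref{thm-yz-discrete}, which is immediate since both are defined in the same way in terms of $\sigma$, $\mu$, and $\delta$. There is no genuine analytic obstacle here: all of the hard work (Young's inequalities, the bound on $\int_{t_n}^t |\Delta \dot Y|\,ds$ and $\int_{t_n}^t |\Delta \dot Z|\,ds$, and the contradiction argument establishing global boundedness of $\bar{w}$) has already been carried out in Theorems \ref{thm-yz-discrete} and \ref{thm:wtilde_bound}. The corollary merely packages these two results, with the global boundedness of $\bar{w}$ promoting the conditional finite-interval statement of Theorem \ref{thm-yz-discrete} into an unconditional global exponential decay.
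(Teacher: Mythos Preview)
Your proposal is correct and matches the paper's own proof essentially verbatim: the paper simply invokes Theorem~\ref{thm:wtilde_bound} to obtain $|\bar{w}|^2 \le \widetilde{K}$ for all $t \in [0,\infty)$, and then notes that the argument of Theorem~\ref{thm-yz-discrete} applies globally with this bound in hand. Your extra care in phrasing the extension via an arbitrary $T > 0$ is a clean way to formalize the passage from finite intervals to $[0,\infty)$, but the underlying logic is identical.
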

\begin{proof}
By Theorem \ref{thm:wtilde_bound}, $|\bar{w}|^2\le \widetilde{K}\quad\forall t\in[0,\infty)$. Using this bound and the exact same proof from Theorem \ref{thm-yz-discrete} we obtain the corollary. 
\end{proof}
\subsection{Navier-Stokes and Nudging}
\label{s:NS}
In the case of the 2D Navier--Stokes equations corresponding results for Theorems \ref{thm-x-continuous}, \ref{thm-x-discrete} and Corollary \ref{coro:yz-discrete} exist. The continuous in time observation case is found in \cite{AOT} and the discrete in time observation case is found in \cite{foias2016discrete}. Both results yield exponential convergence in their respective cases provided appropriate choice of $\mu, \delta$ and appropriate observations are given.

\subsection{DNN Based Data Assimilation}
\label{s:total}
Let $u$ represent the true or reference solution, let $w$ represent the nudging algorithm solution, and let $w_{DNN}$ represent the proposed DNN algorithm. Then by the simple triangle inequality,
\begin{align*}
    \|u-w_{DNN}\|\le \|u-w\| + \|w-w_{DNN}\| := \mathrm{I} + \mathrm{II}.
\end{align*}
The above theory gives us a handle on $\mathrm{I}$. In addition, there is a growing of references that aims to provide bounds on $\mathrm{II}$ for feedforward type DNNs, see \cite{MR4298220} and references therein.

\section{Experimental Protocol}
\label{sec:protocol}

The DNN is trained offline using Algorithm~\ref{alg:offline}. To generate the training data (see~\eqref{trainingdata}), reference solutions are generated from the reference model (\ref{ODE}) using different initial conditions. These synthetic observations occurring every $\delta$ time units are then collected from the reference model solutions and we generate the nudging data (part of the training data) using (\ref{nudging}) on the observations. 

The performance of the algorithm is evaluated in the following experiment: After training the DNN, the reference model is used to produce additional reference solutions which generate synthetic observations for Algorithm~\ref{alg:online}. The DNN based Algorithm~\ref{alg:online} is then given the synthetic observations and we assess the prediction capabilities of the algorithm.

The performance of the algorithm is evaluated by tracking $N^f$ different reference solutions that were not part of the training or validation data. As an evaluation metric we will use the spatio-temporal root mean square error (RMSE):
\begin{equation}\label{RMSE}
RMSE = \sqrt{\frac{1}{(K-k_0)\times N^f}\sum_{k=k_0}^K\sum_{n=1}^{N^f}\big(x_{n,k}^{\text{DA}}-x_{n,k}^{\text{ref}}\big)^2}.
\end{equation}
Here $k_0$ represents the time in which we begin to consider the performance and which starts sometime after the tracking begins in order to remove the effect of the initial state. Let $x_{n,k}^{\text{ref}}$ denotes a reference solution (observations) at time $t_k$. Then $x_{n,k}^{\text{DA}}$ represents the data assimilation solution at time $t_k$ (computed either using nudging or DNN) while using $x_{n,k}^{\text{ref}}$ as the observation data.

\section{Experimental results}
\label{sec:experiments}

The purpose of this section is show the effectively of the proposed algorithm on two well-known ODE examples, i.e., Lorenz 63 and Lorenz 96.

\subsection{Lorenz 63 model}
\label{sec:experiments63}
The Lorenz 63 model is given by the three coupled ODEs \eqref{lorenz}--\eqref{lorenz3}. 
We set $\sigma=10,\ \beta=8/3,\ \rho=28,$ which is known to exhibit chaotic behavior \cite{lorenz}. The equations \eqref{lorenz}--\eqref{lorenz3} are solved using an explicit Runge-Kutta (4,5) in MATLAB. We generate $N_s=1,000$ different initial conditions from three i.i.d. normal distributions with mean $0$ and standard deviation of $10$. The initial conditions are propagated forward to 110 time units using the ODEs \eqref{lorenz}--\eqref{lorenz3} , and we take observations every $\delta=0.1$ time units starting at 100 time units. 

For this example, we will consider two cases, $x$-component and $y$-component observations, respectively. The synthetic observations  are then fed into the nudging algorithm (\ref{nudging}) with $\mu=30$ in the case of $x$ observations and $\mu=10$ in the case of $y$ observations. We generate $N_s = 1,000$ nudging solutions, using the synthetic observations, for a length of 10 time units. Thus, we have $N_s \times \frac{10}{\delta} = 100,000$ samples to choose from. Out of those, we select $N \times N_s$ training samples, where $N = 15$. In particular, we focus on the samples corresponding to the few initial time steps $t = 0, 0.1, 0.2, \dots, 1.5$. As the initial time steps nudging is still tracking the reference solution so these samples will result in a DNN with better forecasting ability compared to a DNN trained on samples where nudging has already converged. The workings of input-output training data generation are described in Algorithm \ref{alg:offline} and Figure~\ref{diagram:offline}.

\subsubsection{Neural Network Architecture} \label{sec:NNA}

Using the setup of \eqref{ResNet1}--\eqref{ResNet4} we train a DNN for each of the components of the model using the 15,000 training samples generated in section \ref{sec:experiments63}. We do the usual 80-20 split, i.e., we use 80\% of the training samples for training and $20\%$ of the training samples for validation. In order to avoid over-fitting we use a patience of 400 iterations with the validation data. If the validation error increases, training continues for an addition number of iterations specified by the patience. Each DNN has 3 hidden layers with a width of 50 each. BFGS optimization routine is used during the training with bias ordering to solve the optimization problem. We use box initialization \cite{box} to initialize the parameters.

\subsubsection{Results} 

For testing, in Figure \ref{fig:sidebyside} we generate two random initial conditions
outside of the usual range of the training data using three i.i.d. standard normal distributions with mean 0 and a standard deviation of 50. We run the initial conditions to 110 time units, and we store observations starting at 100 time units. Using the observations, we calculate the nudging and DNN solutions using Algorithm~\ref{alg:online}.  We see that the DNN is able to track the reference solution. Next we calculate RMSE values \eqref{RMSE} for both data assimilation algorithms with the settings $N^f=100$ and $k_0$ corresponding to 5 time units after observations begin. See table (\ref{table1}) for the results. The results indicate that the DNN was able to recover most of the accuracy of the nudging algorithm.

\begin{figure}[h]
    \centering
    \includegraphics[width=0.495\textwidth]{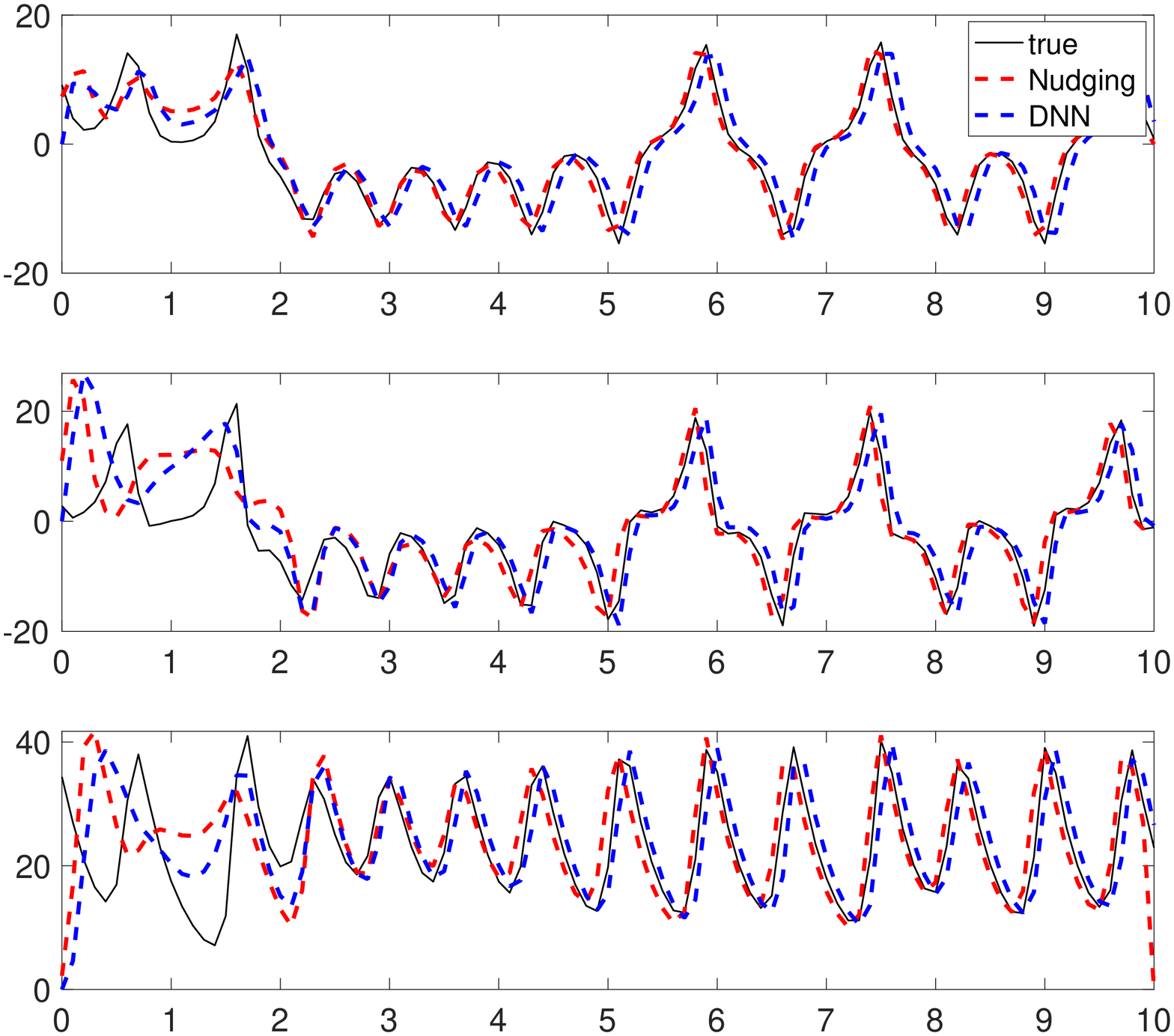}
    \includegraphics[width=0.495\textwidth]{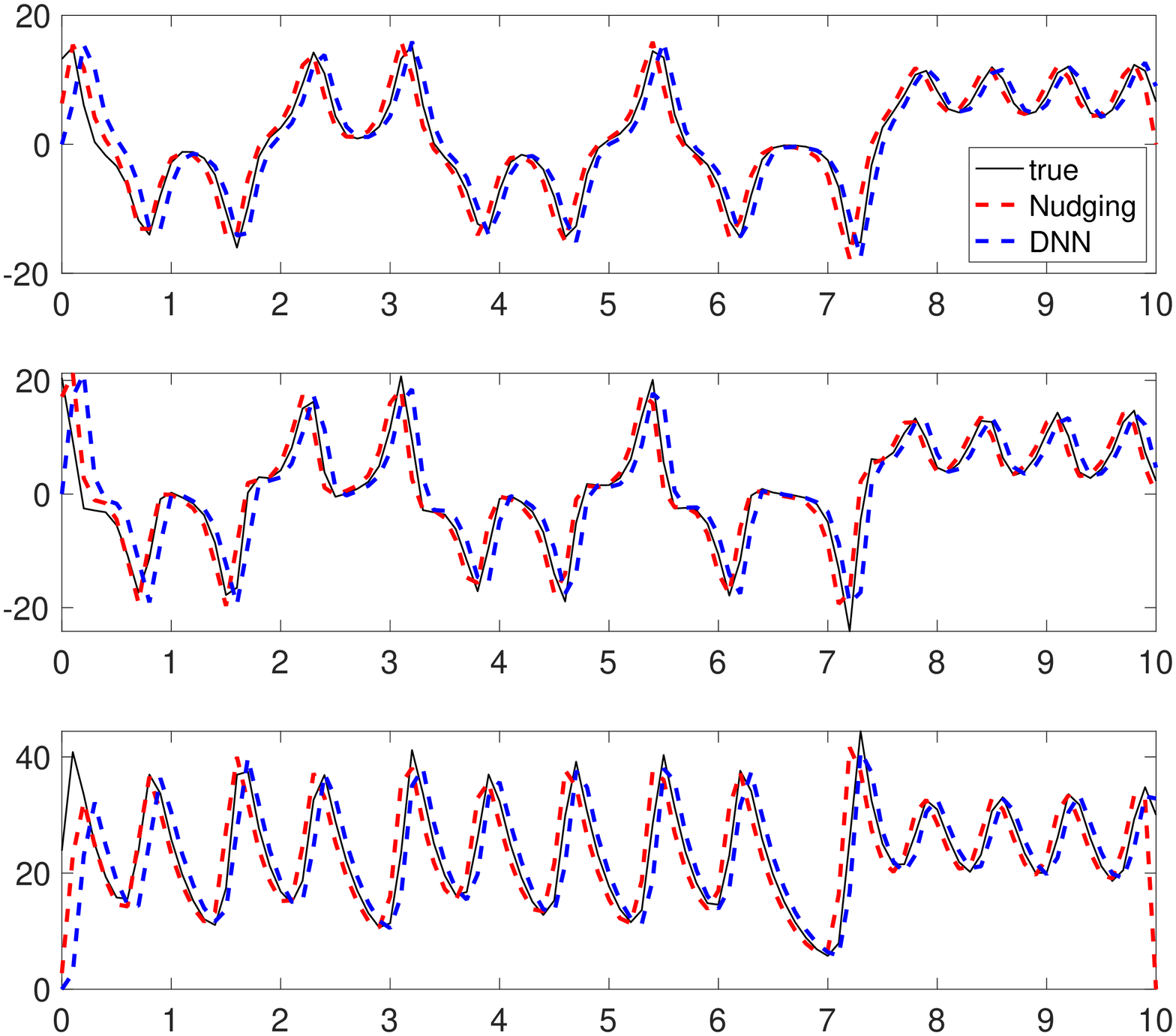}
    \caption{Left: $x$-component observations. Right: $y$-component observations. From top to bottom the $x,y,z$ components of the reference (true), nudging, and DNN solutions over 10 time units for the Lorenz 63 model. }
    \label{fig:sidebyside}
\end{figure}

\begin{table}[!htb]
\begin{center}
\begin{tabular}{ |l|l|l| }
\hline
\multicolumn{3}{ |c| }{RMSE} \\
\hline
 & $x$-obs & $y$-obs \\ \hline
Nudging & 6.0782 & 5.7953 \\
DNN & 6.4456 & 5.8000 \\\hline

\hline
\end{tabular}
\end{center}
\caption{Lorenz 63 RMSE values for nudging and the DNN calculated on 100 reference solutions over the time interval 5 to 10 time units. Notice that the proposed DNN approach can recover similar estimates to nudging.}
\label{table1}
\end{table}
\FloatBarrier
\subsection{Lorenz 96 model}
The Lorenz 96 model is given by the following ODEs: 
\begin{align}\label{Lorenz96}
    \frac{dx_i}{dt} = (x_{i+1} - x_{i-2})x_{i-1} - x_i + F,\quad i=1,2,...,40,\\
    x_{-1} = x_{39}, \quad x_0 = x_{40}, \quad \text{ and }\quad  x_{41} = x_1.
\end{align}
We set $F=10$ which is known to exhibit chaotic behavior \cite{Lorenz96}. We follow a similar procedure as with the Lorenz 63 model and we consider three different observation patterns. We observe approximately $10\%,33\%,\text{and } 50\%$ of the state which corresponds to observing $4,13,\text{and }20$ components, respectively. 
\subsubsection{Neural Network Architecture}
The setup here is similar as in the Lorenz 63 case presented in section \ref{sec:NNA} except for one key difference. We refer to DNN-full as the DNN trained to take in as input the full state in $\mathbb{R}^{40}$ plus the observations. As before, DNN-full is trained on data as described in (\ref{trainingdata}). 
 We refer to DNN-reduced as the DNN that takes in as input a reduced state and reduced observations. DNN-reduced takes advantage of the structure of (\ref{Lorenz96}) and gives the DNN corresponding to component $x_i$, with data from only $\{x_i,x_{i+1},x_{i-1},x_{i-2}\}$. This means the input data, including the observations, for each $i$ is in at most $\mathbb{R}^6$. DNN-reduced trained on 15k samples for the cases of 13 and 20 observations used an architecture of 3 hidden layers each with a width of 50  observations. DNN-reduced trained on 15k samples for the case of 4 observations used an architecture of 15 hidden layers each with a width of 10. DNN-reduced trained on 30k samples used an architecture of 5 hidden layers each with a width of 100 for all cases. 

\subsubsection{Results} 
We proceed as in the case for the Lorenz 63 model. See Figures \ref{fig:sidebyside2}, \ref{fig:lorenz96_4} for the plots and Table \ref{table2} for the RMSE values. We observe in each of the three observation cases the proposed DNN algorithm has a comparable accuracy to the nudging algorithm. In addition, we see from Figure \ref{fig:sidebyside2} that the DNN algorithm is not simply following the nudging solution. The DNN has its own unique solution with the ability to track the true solution.

\begin{table}[!htb]
\begin{center}
\begin{tabular}{ |l|l|l|l| }
\hline
\multicolumn{4}{ |c| }{RMSE} \\
\hline
 & 20 obs & 13 obs & 4 obs \\ \hline
Nudging & 11.9754 & 25.1511 & 36.4937 \\
DNN-reduced(15k) & 17.6243(3x50) & 39.2055(3x50) & 39.7268(15x10)\\
DNN-reduced(30k) & 16.9915(5x100) & 26.2597(5x100) & 37.7265(5x100)\\\hline

\hline
\end{tabular}
\end{center}
\caption{Lorenz 96 RMSE values for nudging and the DNNs calculated on 100 reference solutions over the time interval 5 to 20 time units.}
\label{table2}
\end{table}

\begin{figure}[h]
    \centering
    \includegraphics[width=0.495\textwidth]{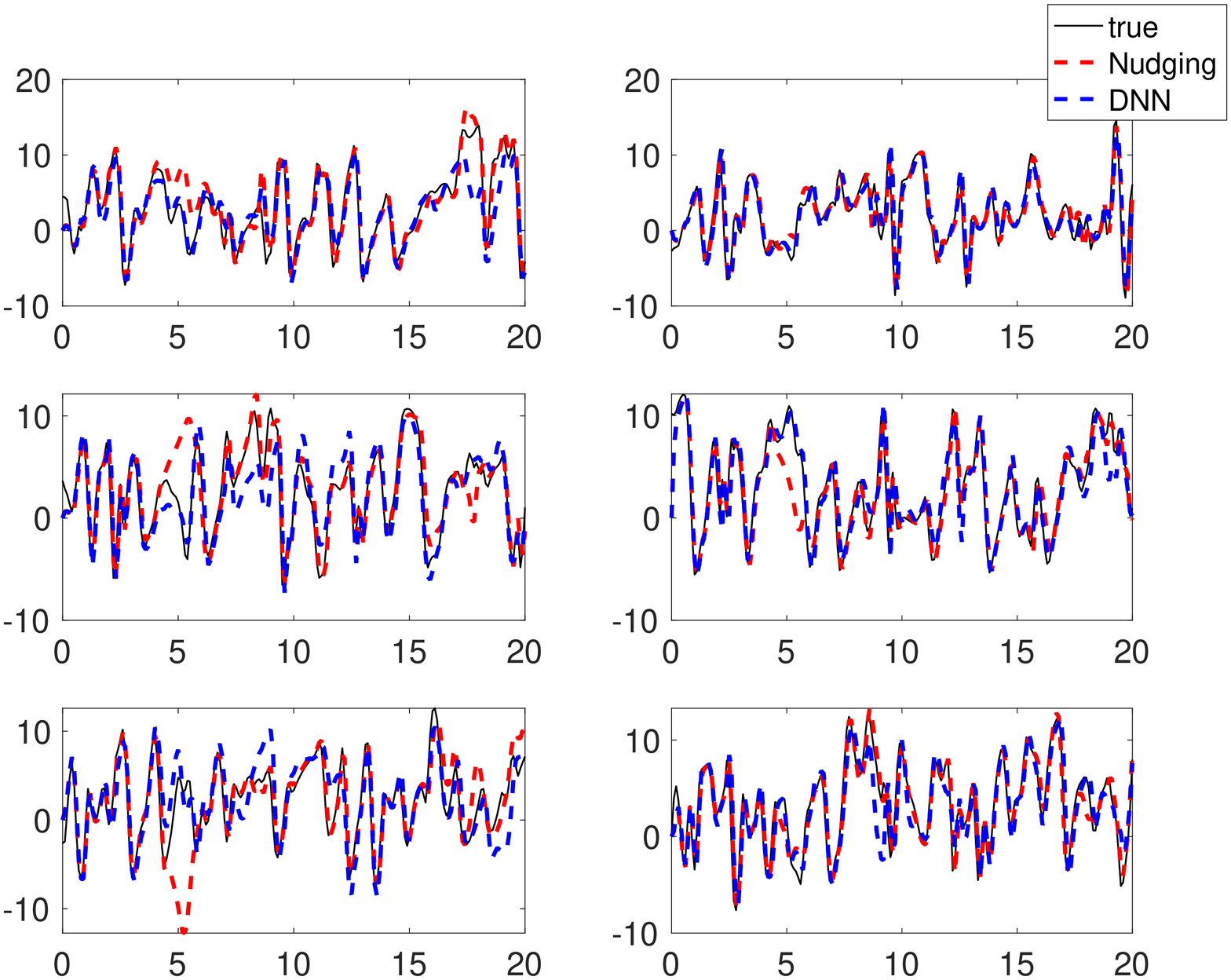}
    \includegraphics[width=0.495\textwidth]{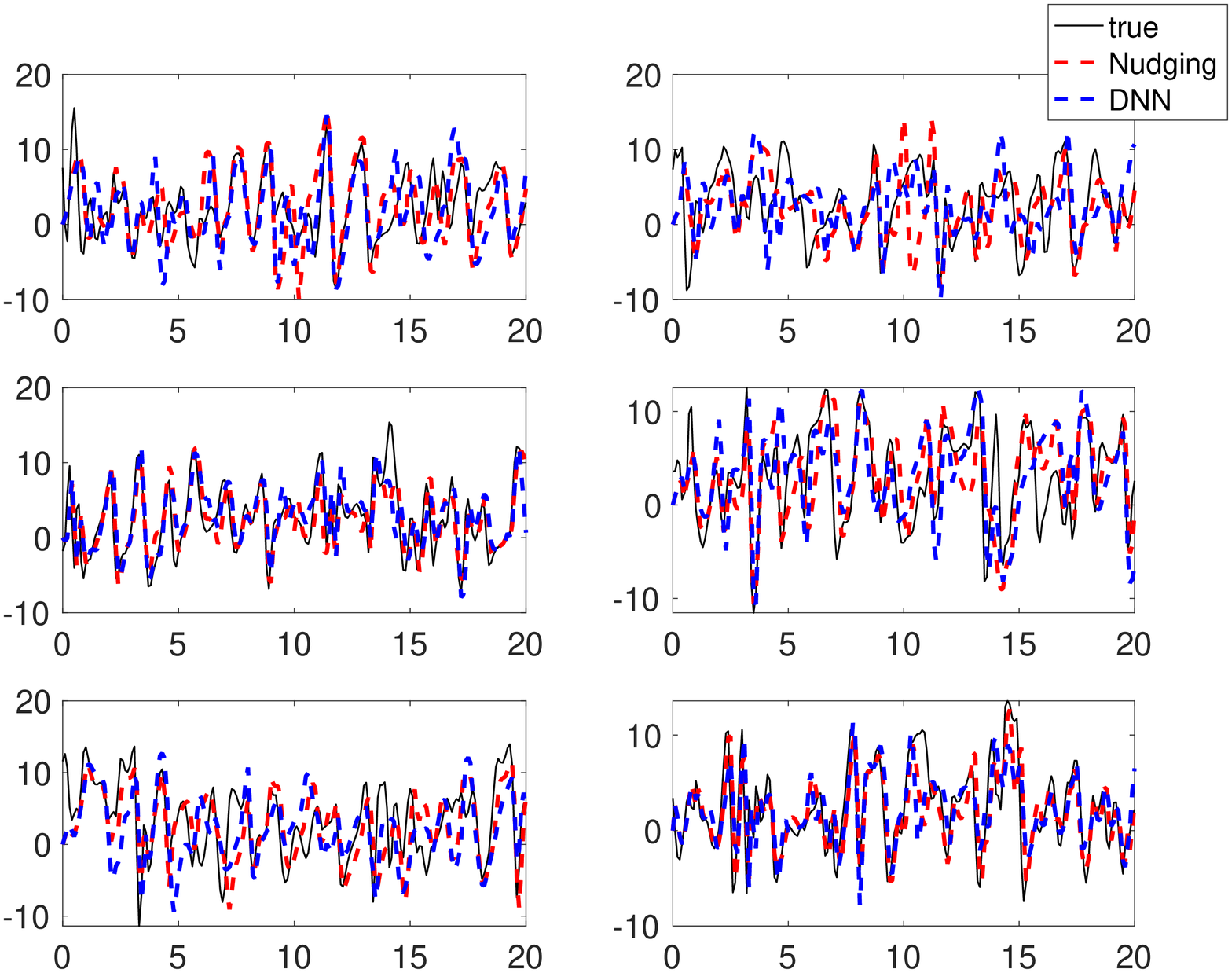}
    \caption{Left: 20 observations on even components. Right: 13 observations on every third component. From left to right, top to bottom the first 6 components of the reference (true), nudging, and DNN solutions over 20 time units for the Lorenz 96 model.}
    \label{fig:sidebyside2}
\end{figure}

\begin{figure}[!htb]
\begin{center}
\includegraphics[width=12cm]{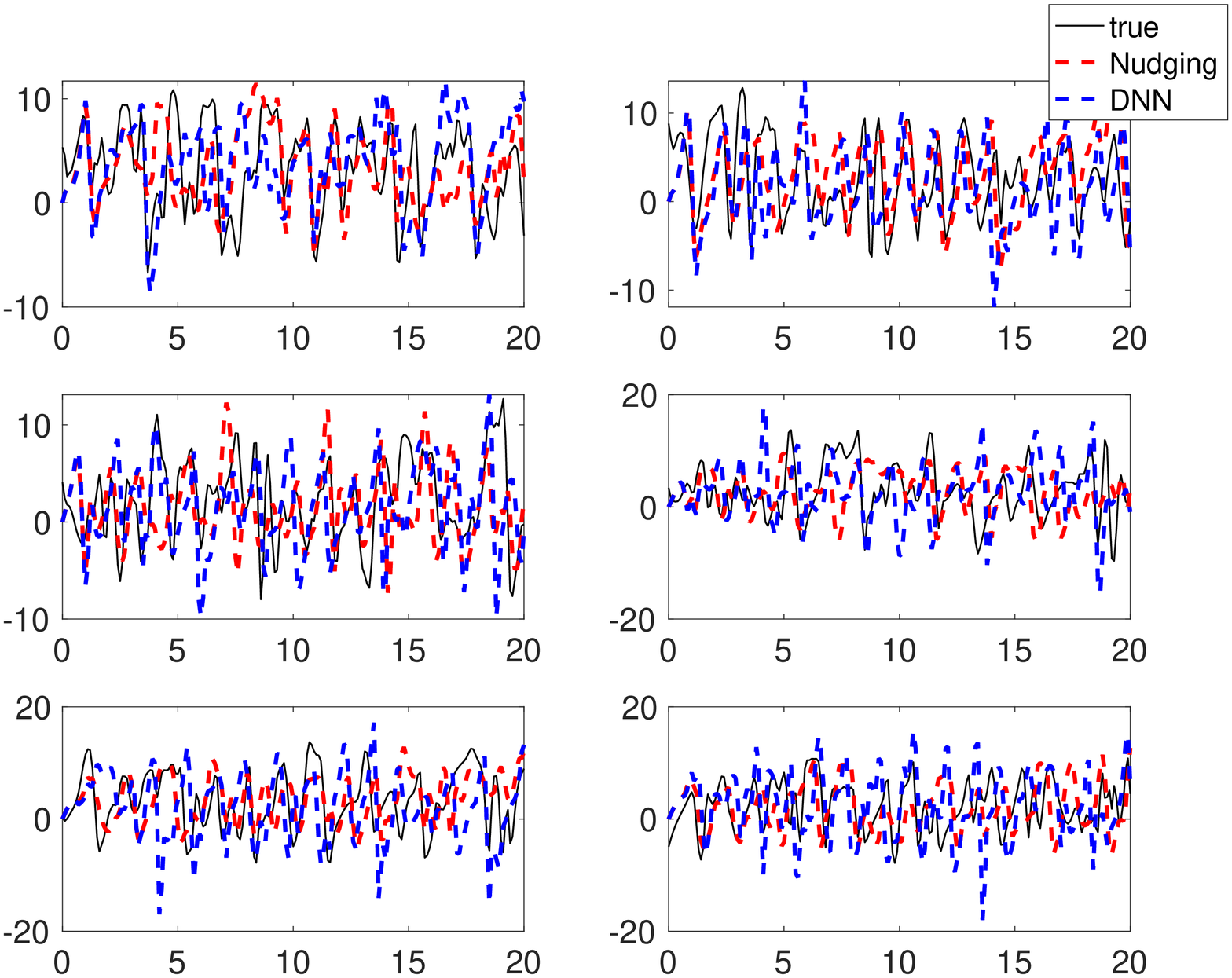}
\end{center}
\caption{From left to right, top to bottom the first 6 components of the reference (true), nudging, and DNN solutions over 20 time units for the Lorenz 96 model with observations every tenth component.}%
\label{fig:lorenz96_4}%
\end{figure}

\section{Conclusions}
\label{sec:conclusions}
A purely neural network based data assimilation algorithm has been introduced. The key idea is to learn the nudging data assimilation algorithm by generating training data that includes observations and the associated nudging algorithm input/output. The result is a DNN that takes as input an initial condition paired with the corresponding observation and the output is able to track the underlying reference solution from which the partial observations are derived. The numerical experiments presented show that the DNN data assimilation algorithm is able to retain most of the accuracy of the nudging algorithm in the case of the Lorenz 63 and Lorenz 96 models. In the case of the Lorenz 96 model, we successfully track the 40 component system with discrete in time observations. This results gives up hope that our algorithm will be useful for more complicated systems such as the Navier-Stokes equations. Finally, we emphasize that within our setup one could easily replace nudging algorithm by other data assimilation algorithms such as the ensemble Kalman filter (EnKF).

\bibliographystyle{siamplain}
\bibliography{references}
\end{document}